\newtheorem{theorem}{Theorem}
\newtheorem{lemma}[theorem]{Lemma}
\theoremstyle{definition}
\newtheorem{remark}[theorem]{Remark}
\newcommand{\Sh}{\operatorname{Sh}}
\newcommand{\loc}{\operatorname{Loc}}
\newcommand{\rank}{\operatorname{rk}}
\title{A Conley index study of the evolution of the Lorenz strange
set}
\author{H\'ector Barge} 
\address{E.T.S. Ingenieros inform\'{a}ticos. Universidad Polit\'{e}cnica de Madrid. 28660 Madrid (Spain)}
\email{h.barge@upm.es}
\author{Jos\'e M.R. Sanjurjo}
\address{Facultad de C.C. Matem\'aticas. Universidad Complutense de Madrid. 28040 Madrid (Spain)}
\email{jose\_sanjurjo@mat.ucm.es}
\keywords{Lorenz equations, Conley index, Transient chaos, Bifurcation, Strange Attractor}
\subjclass[2010]{34C35, 37D45, 37B30 (primary), and 37C70, 34C23, (secondary)}
\thanks{The authors are partially supported by the Spanish Ministerio de Econom{\'{i}}a y Competitividad (grant MTM2015-63612-P)}
\begin{document}

\begin{abstract}
In this paper we study the Lorenz equations using the
perspective of the Conley index theory. More specifically, we examine the
evolution of the strange set that these equations posses throughout the
different values of the parameter. We also analyze some natural Morse decompositions of the global attractor of
the system and the role of the strange set in these decompositions. We calculate the
corresponding Morse equations and study their change along the successive
bifurcations. We see how the main features of the evolution of the Lorenz system are
explained by properties of the dynamics of the global attractor. In addition, we formulate and prove some theorems which  are applicable in more
general situations. These theorems refer to Poincar\'{e}-Andronov-Hopf
bifurcations of arbitrary codimension, bifurcations with two homoclinic
loops and a study of the role of the travelling repellers in the
transformation of repeller-attractor pairs into attractor-repeller ones.
\end{abstract}

\maketitle


\section{Introduction}

Edward N. Lorenz studied in the 1960s a simplified model of fluid convection
dynamics in the atmosphere \cite{Lor}. This model is described by the following
family of differential equations, now known as the Lorenz equations,

\[
\begin{cases}
\frac{dx}{dt}=\sigma (y-x)\\
\frac{dy}{dt}=rx-y-xz\\
\frac{dz}{dt}=xy-bz
\end{cases},
\]

where $\sigma ,r$ and $b$ are three real positive parameters corresponding
respectively to the Prandtl number, the Rayleigh number and an adimensional
magnitude related to the region under consideration. As we vary the
parameters, we change the behaviour of the flow determined by the equations
in $\mathbb{R}^{3}$. The values $\sigma =10$ and $b=8/3$ have deserved
special attention in the literature. We shall fix them from now on, and we
shall consider the family of flows obtained when we vary the remaining
parameter, $r$.

Based on numerical studies of these equations, Lorenz found sensitive
dependence on initial conditions and he emphasized the importance of this
property in the study of natural phenomena, observing that, even in simple
models, trajectories are sensitive to small changes in the initial
conditions. He was able to prove that for every value of the parameter $r$
there is a bounded region (an ellipsoid) which every trajectory eventually
enters and never thereafter leaves. As a consequence, the existence of a
global attractor $\Omega $ of zero volume is established. This attractor is
the intersection of the successive images of the ellipsoid by the flow for
increasing times and should not be confused with the famous Lorenz
attractor, which is a proper subset of $\Omega $.

Afra\u{\i}movi\v{c}, Bykov and Sil'nikov \cite{AfBySi}, Williams \cite{Will} and Guckenheimer and Williams \cite{GuWi} constructed and studied a geometric model of the system based on the
numerically-observed features of the solutions of the Lorenz system. From
this model, the existence of a robust attractor can be derived. Tucker \cite{Tuc1, Tuc2} proved that, in fact, the Lorenz equations define a geometric Lorenz flow
and, as a consequence, they admit an attractor. In \cite{Lu} it is proved
that this attractor is mixing. Tucker's results were preceded by Mischaikow
and Mrozek \cite{MiMr1,MiMr2} and Mischaikow, Mrozek and Szymczak \cite{MiMr3} who gave a computer-assisted proof of the existence of chaos
in the Lorenz equations. An important contribution to the study of the
equations is the book \cite{Spa} by Sparrow. This book was written long before
Tucker's work was available and some of the global statements made in it are
only tentative. However, except for a few details, they have proved to agree
with Tucker's results. The topological classification of the Lorenz
attractors (for different parameter values) can be found in the paper \cite{Ra} by
D. Rand. A recent study of the global organization of the phase space in the
transition to chaos in the Lorenz system can be found in the recent paper \cite{DKO}
by Doedel, Krauskopf and Osinga (see also \cite{DKO1, DKO2}). See also \cite{BaSe2, BaSe} by R. Barrio and S. Serrano for related results. An additional reference is the book \cite{ArPa}, where
the elements of a general theory for flows on three-dimensional manifolds
are presented. The main motivation for this theory was, according to the
authors, the Lorenz equations. 

The present paper is devoted to the study of the Lorenz equations, using the
perspective of the Conley index theory. More specifically, we examine the
evolution of the strange set that these equations posses throughout the
different values of the parameter. We also analyze some natural Morse decompositions of the global attractor of
the system and the role of the strange set in these decompositions. We calculate the
corresponding Morse equations and study their change along the successive
bifurcations. We see how the main features of the evolution of the Lorenz system are
explained by properties of the dynamics of the global attractor. Particular importance is given to the evolution through the
preturbulent stage, just before the strange set becomes an attractor. It is proved that the transition from the preturbulent stage of the system
to the turbulent one is marked by a change in the internal dynamics of the
strange set, namely, an attractor-repeller splitting of the strange set,
which is present at the preturbulent stage, ceases to exist at the turbulent
stage. On the other hand, we see that the evolution of the system from the
homoclinic bifurcation to the Hopf bifurcation corresponds to a
transformation of a repeller-attractor decomposition of the global attractor
into an attractor-repeller one. This transformation is achieved via a
``travelling repeller". The purpose of the paper is to give a global vision from both the dynamical and
the topological perspectives and, based on the features of the Lorenz
equations, formulate and prove some theorems which reach well beyond the scope of these equations and are applicable in more
general situations. These theorems refer to Poincar\'{e}-Andronov-Hopf
bifurcations of arbitrary codimension, bifurcations with two homoclinic
loops and a study of the role of the travelling repellers in the
transformation of repeller-attractor pairs into attractor-repeller ones.

\section{Preliminaries}

Through the paper we  deal with families of flows $\varphi_\lambda:\mathbb{R}^n\times\mathbb{R}\to\mathbb{R}^n$ depending continously on a paremeter $\lambda\in[0,1]$. In some ocassions we assume that these families are induced by families of ODE's $\dot{X}=F_\lambda(X)$ depending differentiably on the parameter. In this case, it will be implicit that, for each $\lambda$, $F_\lambda$ is a $C^1$ map. 

\subsection*{Trajectories, Limit sets and stability.}
The main reference for the elementary concepts of dynamical systems will be \cite{BhSz} but we also recommend \cite{Rob1, PaMe, Pil, AlSaYo}.  

\vline

Let $\varphi:\mathbb{R}^n\times\mathbb{R}\to\mathbb{R}^n$ be a flow. Sometimes we write $xt$ instead of $\varphi(x,t)$ in order to simplify the notation.

\vline

We shall use the notation $\gamma(x)$ for the \emph{trajectory} of the point $x$, i.e. 
\[ 
 \gamma(x)=\{xt\mid t\in\mathbb{R}\}.
\] 
  Similarly for the \emph{positive semi-trajectory} and the \emph{negative semi-trajectory}
\[
\gamma^+(x)=\{xt\mid t\in\mathbb{R}^+\},\quad    \gamma^-(x)=\{xt\mid t\in\mathbb{R}^-\}.
\]  
 
By the \emph{omega-limit} of a point $x$ we understand the set 
\[
\omega(x)=\bigcap_{t>0}\overline{x[t,\infty)}.
\] 
In an analogous way, the \emph{negative omega-limit} is the set 
\[
\omega^*(x)=\bigcap_{t<0}\overline{x(-\infty,t]}.
\]

We recall that if $\omega(x)$ (resp. $\omega^*(x)$) is compact then it must be connected.

\vline

 \subsection*{Attractors} In the literature there are many different definitions of attractor as it has been pointed out by Milnor \cite{MilAt}. Among the definitions treated by Milnor we shall use that of an asymptotically stable compactum.  An invariant compactum $K$ is \emph{stable} if every neighborhood $U$ of $K$
contains a neighborhood $V$ of $K$ such that $V[0,\infty
)\subset U$. Similarly, $K$ is \textit{negatively stable} if every
neighborhood $U$ of $K$ contains a neighborhood $V$ of $K$ such that $V
(-\infty ,0]\subset U$. 

The compact invariant set $K$ is said to be \textit{%
attracting} provided that there exists a neighborhood $U$ of $K$ such that $\emptyset\neq\omega (x)\subset K$, for every $x\in U$, and \emph{repelling} if there
exists a neighborhood $U$ of $K$ such that $\emptyset\neq\omega ^{\ast }(x)\subset K$ for every  $x\in U$.

 An \textit{attractor }(or \emph{asymptotically stable} compactum)\textit{\ }is an attracting stable set and a \textit{repeller }is
a repelling\textit{\ }negatively stable set. We stress the fact that
stability (positive or negative) is required in the definition of attractor
or repeller. 

If $K$ is an attractor, its region (or basin) of attraction of $K$ is the set 
\[
 \mathcal{A}(K)=\{x\in M \mid \emptyset\neq\omega (x)\subset K\}.
\] 
It is well known, that $\mathcal{A}(K)$ is an open invariant set. If in particular $\mathcal{A}(K)$ is the whole phase space we say that $K$ is a \emph{global attractor}. 

\vline

\subsection*{Dissipative flows.} 
Let $M$ be a non-compact, locally compact metric space. A flow $\varphi:M\times\mathbb{R}\to M$ is \emph{dissipative} provided that for each $x\in M$, $\omega(x)\neq\emptyset$ and the closure of the set
\[
\Omega(\varphi)=\bigcup_{x\in M}\omega(x)
\] 
is compact.

The dissipativeness of $\varphi$ is equivalent to the existence of a global attractor or, equivalently, to $\{\infty\}$ being a repeller for the flow extended to the Alexandroff compactification of $M$, leaving $\infty$ fixed. This was proved by Pliss \cite{Pli}. Dissipative flows have been introduced by Levinson \cite{Lev}. An interesting reference regarding dissipative flows is \cite{Hal}.

\vline

\subsection*{Isolated invariant sets and isolating blocks.} An important class of invariant compacta is the so-called \emph{isolated invariant sets} (see \cite{Con,ConEast, EaJDE} for details). These are compact invariant sets $K$ which possess an \emph{isolating neighborhood}, i.e. a compact neighborhood $N$  such that $K$ is the maximal invariant set in $N$.  

 A special kind of isolating neighborhoods will be useful in the sequel, the so-called \emph{isolating blocks}, which have good topological properties. More precisely, an isolating block $N$ is an isolating neighborhood such that there are compact sets $N^i,N^o\subset\partial N$, called the entrance and the exit sets, satisfying
\begin{enumerate}
\item $\partial N=N^i\cup N^o$;
\item for each $x\in N^i$ there exists $\varepsilon>0$ such that $x[-\varepsilon,0)\subset M\setminus N$ and for each $x\in N^o$ there exists $\delta>0$ such that $x(0,\delta]\subset M\setminus N$;
\item for each $x\in\partial N\setminus N^i$ there exists $\varepsilon>0$ such that $x[-\varepsilon,0)\subset \mathring{N}$ and for every $x\in\partial N\setminus N^o$ there exists $\delta>0$ such that $x(0,\delta]\subset\mathring{N}$.
\end{enumerate} 

These blocks form a neighborhood basis of $K$ in $M$. If the flow is differentiable, the isolating blocks can be chosen to be
manifolds which contain $N^{i}$ and $N^{o}$ as submanifolds
of their boundaries and such that $\partial N^{i}=\partial N^{o}=N^{i}\cap
N^{o}$.

\vline

\subsection*{Hartman-Grobman blocks and complex invariant manifolds}

 Let $\dot{X}=F(X)$ an ODE defined in $\mathbb{R}^n$ and let $\varphi$ be the (local) flow induced by this vector field. Suppose that $\dot{X}=F(X)$ possesses a hyperbolic fixed point $p$ and let $\varphi_*$ the flow induced  by the linearization $\dot{Y}=dF(p)Y$ of $\dot{X}=F(X)$. Then, Hartman-Grobman Theorem (see \cite[Chapter~2, pg. 120]{Perk} or \cite[Theorem~II.3, pg. 53]{Zehn}) ensures that there exist neighborhoods $U$ of $p$ and $V$ of $0$ in $\mathbb{R}^n$ and a homeomorphism $h: U\to V$ such that $h(\varphi(x,t))=\varphi_*(h(x),t)$ if $\varphi(x,[0,t])\subset U$.  Let $\delta>0$ be such that the closed ball $\overline{B_\delta(0)}$ of radius $\delta$ centered at $0$ is contained in $V$. Notice that $\overline{B_\delta(0)}$ is an isolating block of $\{0\}$ for $\varphi_*$ if the norm of $\mathbb{R}^n$ is chosen properly. Then, it follows that $h^{-1}(\overline{B_\delta(0)})$ is an isolating block of $\{p\}$ for $\varphi$. We shall call this kind of blocks \emph{Hartman-Grobman blocks} of the hyperbolic fixed point $\{p\}$. 
 
 Consider an ODE $\dot{X}=F(X)$ defined in $\mathbb{R}^3$ and let $p$ be a hyperbolic fixed point having one negative eigenvalue $\beta$ and two complex conjugated eigenvalues $\mu\pm\nu i$ with $\mu<0$.  Let $\dot{Y}=dF(p)Y$  be the linearization of $\dot{X}=F(X)$, $E$ the invariant $2$-dimensional subspace associated to the complex eigenvalues and $\delta>0$ such that $\overline{B_\delta(0)}\subset V$. We call \emph{local complex invariant manifold} of $p$ to the positively invariant open $2$-disk $\loc W^\mathbb{C}=h^{-1}(E\cap B_\delta(0))$. The \emph{complex invariant manifold} of $p$ is defined as the set of points whose forward trajectory eventually enters $\loc W^\mathbb{C}$, i.e.
\[
W^\mathbb{C}=\{x\in\mathbb{R}^3\mid  xt\in \loc W^\mathbb{C}\; \mbox{for some}\; t>0\}.
\]

\vline

\subsection*{Algebraic topology and shape theory.}
We use some topological notions through this paper. We recommend the books of Hatcher and Spanier \cite{Hat,Span} to cover this material. We use the notation $H^*$ for \v Cech cohomology. We consider cohomology taking coefficients in $\mathbb{Z}$. We recall that \v Cech and singular cohomology theories agree on polyhedra and manifolds and, more generally, on ANRs. 

If a pair of spaces $(X,A)$ satisfies that its cohomology $H^k(X,A)$ is finitely generated for each $k$ and is non-zero only for a finite number of values of $k$ (as it happens if $(X,A)$ is a pair of compact manifolds), its \emph{Poincar\'e polinomial} is defined as
\[
P_t(X,A)=\sum_{k\geq 0} \rank H^k(X,A)t^k.
\]

There is a form of homotopy which has proved to be the most convenient for the study of the global {to\-po\-lo\-gi\-cal} properties of the invariant spaces involved in dynamics, namely \emph{Borsuk's homotopy theory} or \textit{shape theory}, introduced and studied by Karol Borsuk.  We are not going to make a deep use of shape theory but we recommend to the interested reader the books \cite{Bormono,MarSe, DySe} for an exhaustive treatment of the subject, and the papers \cite{KapRod, GMRSNLA, SanMul, GMRSo, SGRAC, Hast, Sanin, Rob2} for a short comprehensive introduction and some applications to dynamical systems. We only recall that shape theory and homotopy theory agree when dealing with manifolds, CW-complexes or, more generally, ANRs and that \v Cech cohomology is a shape invariant.

\vline

\subsection*{Conley index.}
Let $K$ be an isolated invariant set. Its \emph{Conley index} $h(K)$ is defined as the pointed homotopy type of the topological space $(N/N^o,[N^o])$, where $N$ is an isolating block of $K$.  A weak version of the Conley index which will be useful for us is the \emph{cohomological index} defined as $CH^*(K)= H^*(h(K))$. It can be proved that $CH^*(K)\cong H^*(N, N^o)$. Our main references for the Conley index theory and its applications are \cite{Con, ConZehnInv, ConZehn, Sal, RobSal}.  In addition, some applications of the Conley index theory to the study of the Lorenz equations can be seen in \cite{Sanjnon, Sanjhopf, GSIAM}.

\vline

\subsection*{Morse decompositions and equations.}

We recall that if $K$ is a compact invariant set, the finite collection  $\{M_1,\ldots,M_n\}$ of pairwise disjoint invariant subcompacta of $K$ is a \emph{Morse decomposition} if it satisfies that
\[
\mbox{for each}\; x\in\left(K\setminus\bigcup_{i=1}^n M_i\right),\quad \omega(x)\subset M_j\;\mbox{and}\; \omega^*(x)\subset M_k\;\mbox{with}\; j<k.   
\]
Each set $M_i$ is said to be a \emph{Morse set}.

Given a Morse decomposition $\{M_1,M_2,\ldots, M_k\}$ of an isolated invariant set $K$, there exists a polynomial $Q(t)$ whose coefficients are non-negative integers such that
\[
\sum_{i=1}^n P_t(h(M_i))=P_t(h(K))+(1+t)Q(t).
\]

This formula, which relates the Conley indices of the Morse sets with the Conley index of the isolated invariant set is known as the \emph{Morse equations} of the Morse decomposition and it generalizes the classical Morse inequalities.

\vline

\subsection*{Hausdorff distance}

Let $X$ be a complete metric space with metric $d$ and consider the \emph{hyperspace} $\mathcal{H}(X)$ whose elements are the non-empty subcompacta of $X$.  We recall that the \emph{Hausdorff distance} in $\mathcal{H}(X)$ is defined as
\[
d_H(A,B)=\inf\{\varepsilon>0\mid B\subset A_\varepsilon\;\mbox{and}\; A\subset B_\varepsilon\},
\]

where $A_\varepsilon$ and $B_\varepsilon$ denote the open $\varepsilon$-neighborhoods of $A$ and $B$, with respect to the metric $d$, respectively. For more information about the Hausdorff distance and its properties we recommend de book \cite{Barn}.

\subsection*{Continuations of isolated invariant sets.}
In this paper the concept of continuation of isolated invariant sets plays a crucial role. Let $M$ be a locally compact metric space, and let $\varphi_\lambda:M\times\mathbb{R}\to  M$ be a parametrized family of flows (parametrized by $\lambda\in[0,1]$, the unit interval). The family $(K_\lambda)_{\lambda\in J}$,  where $J\subset[0,1]$ is a closed (non-degenerate) subinterval and, for each $\lambda\in J$, $K_\lambda$ is an isolated invariant set for $\varphi_\lambda$ is said to be a \emph{continuation} if for each $\lambda_0\in J$ and each $N_{\lambda_0}$ isolating neighborhood for $K_{\lambda_0}$, there exists $\delta>0$ such that $N_{\lambda_0}$ is an isolating neighborhood for $K_\lambda$ for every $\lambda\in (\lambda_0 -\delta, \lambda_0 + \delta)\cap J$. We say that the family $(K_\lambda)_{\lambda\in J}$ is a continuation of $K_{\lambda_0}$ for each $\lambda_0\in J$.

Notice that \cite[Lemma~6.1]{Sal} ensures that if $K_{\lambda_0}$ is an isolated invariant set for $\varphi_{\lambda_0}$, there always exists  a continuation $(K_\lambda)_{\lambda\in J_{\lambda_0}}$ of $K_{\lambda_0}$ for some closed (non-degenerate) subinterval $\lambda_0\in J_{\lambda_0}\subset[0,1]$.

There is a simpler definition of continuation based on \cite[Lemma 6.2]{Sal}. There, it is proved that if $\varphi_\lambda : M \times\mathbb{R}\to M$ is a parametrized family of flows and if $N_1$ and $N_2$ are isolating neighborhoods of the same isolated invariant set for $\varphi_{\lambda_0}$, then there exists $\delta>0$ such that $N_1$ and $N_2$ are isolating neighborhoods for $\varphi_\lambda$, for every $\lambda\in(\lambda_0-\delta,\lambda_0 +\delta)\cap[0,1]$, with the property that, for every $\lambda$, the isolated invariant subsets in $N_1$ and $N_2$ which have $N_1$ and $N_2$ as isolating neighborhoods coincide.

Therefore, the family $(K_\lambda)_{\lambda\in J}$, with $K_\lambda$ an isolated invariant set for $\varphi_\lambda$, is a continuation if for every $\lambda_0\in J$ there are an isolating neighborhood $N_{\lambda_0}$ for $K_{\lambda_0}$ and a $\delta > 0$ such that $N_{\lambda_0}$ is an isolating neighborhood for $K_\lambda$, for every $\lambda\in(\lambda_0-\delta,\lambda_0 +\delta)\cap J$.

Notice that, since this should not lead to any confusion, sometimes we will only say that $K_\lambda$ is a continuation of $K_{\lambda_0}$ without specifying the subinterval $J\subset[0,1]$ to which the parameters belong.

We shall make use of \cite[Theorem~4]{sanjuni} which states that if $K_{\lambda_0}$ is an atractor for $\varphi_{\lambda_0}$ and $(K_\lambda)_{\lambda\in J}$ is a continuation of $K_{\lambda_0}$, then there exists $\delta>0$ such that $K_\lambda$ is an attractor of the same shape of $K_{\lambda_0}$ for $\lambda\in(\lambda_0-\delta,\lambda_0+\delta)\cap J$.
\section{Generalized Pitchfork bifurcations}

We shall use along the paper some facts about the Lorenz equations which can
be found in the existing literature. We recommend, in particular, the book
by Sparrow \cite{Spa}.

For $r\leq 1$ the origin is a global attractor (this includes $r=1$ although
for $r=1$ there are two negative eigenvalues and the third is equal to
zero). For $r>1$ there are two additional singularities $C_{1}$ and $C_{2}$
which are attractors until $r=24.74$ (when a Hopf bifurcation takes place).
 For $r>1$ the origin is a hyperbolic fixed point with a two-dimensional stable manifold and a
one-dimensional unstable manifold. All the points not lying in $W^{s}(0)$
are attracted by $C_{1}$ or $C_{2}$ until the value $r=13.926$, when a
homoclinic bifurcation takes place. For all $r$ with $1<r<13.926$, the
unstable manifold of $0$ consists of two orbits attracted by $C_{1}$ and $%
C_{2}$ respectively, together with $0$. Hence, at $r=1$, a pitchfork
bifurcation takes place in the origin, which is an attractor for $r=1$ and
becomes a hyperbolic non stable fixed point for $r>1$. We summarize the
discussion in the following statement:

\subsection*{-}\textbf{For }$r\leq 1$ \textbf{the origin is a global attractor and for }$%
r>1 $ \textbf{it becomes a hyperbolic fixed point with a two-dimensional
stable manifold and a one-dimensional unstable manifold. }

\bigskip

This is a particular example of a phenomenon which can be studied in a more
general form in $\mathbb{R}^{n}$ with an arbitrary distribution of positive
and negative eigenvalues. There are two extreme cases: a) when the
origin becomes a hyperbolic point with dimension of $W^{u}(0)$ equal to $1$
(which is the current situation with $n=3$) and b) when the origin becomes a
hyperbolic point with dimension of $W^{u}(0)$ equal to $n$ or, in other
words, the origin becomes a repeller. The second case has been called a%
\textit{\ generalized Poincar\'{e}-Andronov-Hopf bifurcation} \cite{Sanjhopf,SeFlo}. We would
like to study in detail this phenomenon for arbitrary dimension of $W^{u}(0)$
because, when it takes place, an interesting invariant object is created
near the origin, namely an attractor with the Borsuk homotopy type (or shape) of a sphere of dimension
one unit less than the dimension of $W^{u}(0).$

In order to state our next result, we must introduce first a definition
which is applicable in the following situation: Let $\varphi _{\lambda }:%
\mathbb{R}^{n}\times \mathbb{R}\rightarrow \mathbb{R}^{n}$ be a family of
flows induced by a system $\dot{X}=F_{\lambda }(X)$ of ODE in $\mathbb{R}^{n}
$ which depend differentiably on a parameter $\lambda \in \lbrack 0,1]$ and
suppose that $0$ is an equilibrium for every $\lambda $. Suppose,
additionaly, that $0$ is an attractor for $\lambda =0$ and a hyperbolic
fixed point with exactly $k$ positive and $n-k$ negative eigenvalues for $%
\lambda >0$ (hence, $W_{\lambda }^{s}(0)$ is an immersed $(n-k)-$dimensional
manifold). We say that the family is \textit{rigid }at $\lambda =0$ if there
is an $\varepsilon >0$ arbitrarilly small and a $\lambda _{\varepsilon}>0$ such that
every trajectory of $W_{\lambda }^{s}(0)$ other than $0$ leaves $\overline{B_\varepsilon(0)}$ in the past and the pair $(\overline{B_\varepsilon(0)},\overline{B_\varepsilon(0)}\cap W_{\lambda
}^{s}(0))$ is homeomorphic to the pair $(\overline{B_{n}},\overline{B_{n-k}})$ (the unit closed
balls of dimensions $n$ and $(n-k)$ respectively) for every $\lambda $ with $%
0<\lambda <\lambda_{\varepsilon}$. Rigidity is a kind of uniformity condition for the
local stable manifolds (which are known to be topological $(n-k)-$balls), which
prevents them from collapsing immediately after $\lambda =0$ (it is not
difficult to describe situations where that phenomenon occurs).

In the case of the Lorenz system immediately after the pitchfork bifurcation
($r>1$), the stable manifold $W^{s}(0)$ of the origin can be regarded, at
least near the origin, as a plane, the plane associated with the two
negative eigenvalues of the flow linearized near the origin (see \cite[p. 13]{Spa}).
Thus, for $\varepsilon $ sufficiently small, every trajectory other than $0$
leaves $\overline{B_{\varepsilon }(0)}$ in the past and the pair $(\overline{%
B_{\varepsilon }(0)},\overline{B_{\varepsilon }(0)}\cap W^{s}(0))$ is homeomorphic
to the pair $(\overline{B_{3}},\overline{B_{2}})$ for all values of the
parameter sufficiently close to $r=1$. Hence the Lorenz system is rigid at
the value of the  pitchfork bifurcation. 

\begin{theorem}\label{pitchfork}
 Let $\varphi _{\lambda }:\mathbb{R}^{n}\times \mathbb{R}\rightarrow 
\mathbb{R}^{n}$ be a family of flows induced by a system $\dot{X}=F_{\lambda
}(X)$ of ODE in $\mathbb{R}^{n}$ depending differentiably on a parameter $%
\lambda \in \lbrack 0,1]$ and suppose that $0$ is an equilibrium for every $%
\lambda $. Suppose that $\{0\}$ is an attractor for $\lambda =0$ and a
hyperbolic fixed point with exactly $k$ positive and $n-k$ negative eigenvalues for $\lambda >0$%
. We assume that $W_{\lambda }^{s}(0)$ is rigid at $\lambda =0$. Then there
exists a $\lambda _{0}>0$ such that for every $\lambda $ with $0<\lambda
<\lambda _{0}$ there exists an attractor $A_{\lambda }$ with the Borsuk
homotopy type (shape) and the cohomology of the sphere $S^{k-1}$. The Conley index of $%
A_{\lambda }$ is the homotopy type of $(S^{k-1}\cup \{\ast \},\ast)$  and its
cohomological Conley index is $\mathbb{Z}$ for $i=0,k-1$ and $\{0\}$ 
otherwise if $k>1$ and $\mathbb{Z}\oplus\mathbb{Z}$ for $i=0$ and $\{0\}$ otherwise if $k=1$. Moreover, the family $A_{\lambda }$ shrinks to $0$ when $\lambda
\rightarrow 0$ (in particular, if $k=2$ we have a family of attractors with
the shape of $S^{1}$ shrinking to $0$). Moreover, the attractor $A_{\lambda
} $ is contained in an attractor $K_{\lambda }$ of trivial shape which
contains the origin and such that $(A_{\lambda},\{0\})$ is an
attractor-repeller decomposition of $K_{\lambda }$ whose Morse equations are
\[
1+t^{k-1}+t^k=1+(1+t)t^{k-1}.
\]
The family $(K_{\lambda })$ also shrinks to $0$. In the particular case
of the Lorenz flow, $A_{\lambda }$ consists of two equilibria, i.e. $%
A_{\lambda }=S^{0}$ and $K_\lambda$ is the union of $A_\lambda$ with the unstable manifold of the origin i.e. $K_\lambda\approx \overline{B_1}$ and the Morse equations are 

\[
2+t=1+(1+t).
\]
\end{theorem}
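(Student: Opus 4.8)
The plan is to realize the attractor $A_\lambda$ as the maximal invariant set in a suitably chosen isolating block built from the rigidity hypothesis, and then to read off its shape and Conley index. First I would fix $\varepsilon>0$ small and take $\lambda_0=\lambda_\varepsilon$ from the rigidity condition, so that for $0<\lambda<\lambda_0$ the pair $(\overline{B_\varepsilon(0)},\overline{B_\varepsilon(0)}\cap W^s_\lambda(0))$ is homeomorphic to $(\overline{B_n},\overline{B_{n-k}})$ and every stable trajectory other than $0$ exits $\overline{B_\varepsilon(0)}$ in backward time. The key observation is that under rigidity, $\overline{B_\varepsilon(0)}$ is an isolating neighborhood whose entrance set is exactly a neighborhood of $\overline{B_\varepsilon(0)}\cap W^s_\lambda(0)$, i.e. of the $(n-k)$-ball, while the exit set is the complementary part of the sphere $\partial B_\varepsilon$; the maximal invariant set $K_\lambda$ in $\overline{B_\varepsilon(0)}$ consists of $0$ together with those points whose whole orbit stays in the ball, which are carried by the unstable directions. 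I would identify $A_\lambda$ as the maximal invariant set disjoint from $W^s_\lambda(0)$ inside $K_\lambda$ — equivalently the complementary repeller in the attractor $K_\lambda$ once we pass to the Alexandroff-type / block quotient picture.

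Next I would compute the Conley index of $K_\lambda$ directly from the block $N=\overline{B_\varepsilon(0)}\approx \overline{B_n}$ with exit set $N^o$ homeomorphic to the part of $\partial B_\varepsilon$ transverse to the unstable manifold. Since the stable slice is an $(n-k)$-ball sitting as $\overline{B_{n-k}}\subset \overline{B_n}$, the exit set $N^o$ deformation retracts onto a sphere $S^{k-1}$ (the linking sphere of the $(n-k)$-ball inside the boundary $S^{n-1}$), so $h(K_\lambda)$ is the pointed homotopy type of $(\overline{B_n}/S^{k-1},[S^{k-1}])$, which is contractible; hence $K_\lambda$ has trivial shape and trivial cohomological Conley index, consistent with $K_\lambda$ being an attractor containing the origin with $K_\lambda$ of the homotopy type of a point. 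For $A_\lambda$ I would argue that, as the dual repeller complementary to $\{0\}$ in $K_\lambda$, its unstable directions produce an isolating block whose quotient is $(S^{k-1}\cup\{\ast\},\ast)$, giving $CH^i(A_\lambda)=\mathbb{Z}$ for $i=0,k-1$ and $0$ otherwise; by the agreement of shape and Čech cohomology and the standard fact that an attractor has the shape of (the inverse limit of) its neighborhoods, $A_\lambda$ acquires the shape and cohomology of $S^{k-1}$. The Morse equations then follow mechanically from $P_t(h(A_\lambda))=1+t^{k-1}$ and $P_t(h(\{0\}))=t^k$ (the index of a hyperbolic point with $k$-dimensional unstable manifold) against $P_t(h(K_\lambda))=1$, yielding $1+t^{k-1}+t^k=1+(1+t)t^{k-1}$, so $Q(t)=t^{k-1}$.

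The assertion that $A_\lambda$ and $K_\lambda$ shrink to $0$ as $\lambda\to0$ I would obtain from the arbitrariness of $\varepsilon$ in the rigidity hypothesis together with an upper-semicontinuity / continuation argument: since for each small $\varepsilon$ one can find $\lambda_\varepsilon$ making $\overline{B_\varepsilon(0)}$ an isolating block, the invariant sets $A_\lambda,K_\lambda\subset\overline{B_\varepsilon(0)}$ have Hausdorff distance to $\{0\}$ bounded by $\varepsilon$ for $\lambda<\lambda_\varepsilon$, forcing $d_H(K_\lambda,\{0\})\to0$. Finally, for the Lorenz specialization I would invoke the structural facts quoted at the start of the section: for $1<r<13.926$ the unstable manifold of $0$ is two orbits running into the equilibria $C_1,C_2$, so here $k=2$ and $A_\lambda=\{C_1,C_2\}\approx S^0$, while $K_\lambda$ is $A_\lambda$ together with the one-dimensional unstable manifold of the origin, a set homeomorphic to $\overline{B_1}$; substituting $k=2$ gives the Morse equation $2+t=1+(1+t)$. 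The main obstacle I anticipate is the careful identification of the exit set $N^o$ up to the homotopy type of $S^{k-1}$ under only a topological (homeomorphism-level) rigidity hypothesis rather than a smooth normal-form, and correspondingly the verification that the complementary repeller $A_\lambda$ genuinely has the shape — not merely the cohomology — of $S^{k-1}$; this is where shape-theoretic continuity of the attractor as an intersection of isolating blocks, rather than a naive homotopy computation, must be used.
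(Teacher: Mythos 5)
Your overall skeleton (a local attractor $K_\lambda$ containing the origin, the dual attractor $A_\lambda$ to the repeller $\{0\}$, the attractor--repeller exact sequence, shrinking via the arbitrariness of $\varepsilon$, and the Lorenz specialization) matches the paper's, but two steps are genuinely broken or missing. First, your computation of $h(K_\lambda)$ from the block $N=\overline{B_\varepsilon(0)}$ with exit set $N^o\simeq S^{k-1}$ fails on two counts: rigidity gives no control over the entrance/exit structure of $\partial B_\varepsilon(0)$ (it only constrains $W_\lambda^s(0)$ inside the ball, so $\overline{B_\varepsilon(0)}$ need not be an isolating block at all), and even granting your description the quotient $\overline{B_n}/S^{k-1}$ is homotopy equivalent to $S^{k}$, not to a point --- what you have computed is the index of the hyperbolic equilibrium $\{0\}$, not of the attractor $K_\lambda$. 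The paper obtains the trivial index of $K_\lambda$ for free by defining $K_\lambda$ as the Conley continuation of the attractor $\{0\}$ at $\lambda=0$ and invoking continuation-invariance; this is also what makes $K_\lambda$ an attractor and lets the exact sequence of $(A_\lambda,\{0\})$ deliver $CH^*(A_\lambda)$. Relatedly, you assume rather than prove that $W_\lambda^u(0)\subset K_\lambda$, that $A_\lambda=K_\lambda\setminus W_\lambda^u(0)$ is compact, and that $(A_\lambda,\{0\})$ is an attractor--repeller pair; all of these use rigidity essentially (the paper pushes a putative bad sequence backwards through a Hartman--Grobman block to manufacture a point of $W_\lambda^s(0)$ whose forward orbit never leaves $\overline{B_\varepsilon(0)}$, contradicting rigidity).

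Second, and more seriously, the passage from $H^*(A_\lambda)\cong H^*(S^{k-1})$ to $\Sh(A_\lambda)=\Sh(S^{k-1})$ --- which you yourself flag as the main obstacle --- is left unfilled, and it does not follow from ``agreement of shape and \v{C}ech cohomology'': \v{C}ech cohomology is a shape invariant, not a complete one. The paper's mechanism is concrete: rigidity makes $\overline{B_\delta(0)}\setminus W_\lambda^s(0)\simeq S^{k-1}$ a (non-invariant) neighborhood inside the basin of $A_\lambda$; the time-$k$ maps $r_k=\varphi_\lambda(\cdot,k)$ form an approximative sequence in Borsuk's sense, hence a shape morphism onto $A_\lambda$ admitting the inclusion as a left inverse, so $\Sh(S^{k-1})\geq\Sh(A_\lambda)$; then the Borsuk--Holszty\'nski theorem (a compactum shape-dominated by a sphere has either trivial shape or the shape of that sphere) combined with $H^{k-1}(A_\lambda)\neq 0$ forces $\Sh(A_\lambda)=\Sh(S^{k-1})$. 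Without some such domination argument your proof yields only the cohomology, not the shape, of $A_\lambda$, and hence not the asserted Conley index $(S^{k-1}\cup\{\ast\},\ast)$ either.
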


\begin{proof}
Since $\{0\}$ is an attractor of $%
\varphi _{\lambda }$ for $\lambda =0$, there is a continuation $(K_{\lambda
})$ of $\{0\}$, where $K_{\lambda }$ is an attractor of trivial shape of $\varphi _{\lambda }$ 
for $\lambda$ sufficiently small \cite[Theorem~4]{sanjuni}. Since $K_{\lambda }$ is the maximal invariant set of $%
\varphi _{\lambda }$ in a neighborhood of $0$ and $0$ is an equilibrium of $%
\varphi _{\lambda }$ we have that $0\in K_{\lambda }$. We define $A_{\lambda
}=K_{\lambda }\setminus W_{\lambda }^{u}(0)$. First we see that $W_{\lambda }^{u}(0)\subset
K_{\lambda }.$ Otherwise, there exists a point $x\in W_{\lambda }^{u}(0)$
such that $x\notin K_{\lambda }$ and we arrive at a contradiction as follows$%
.$ Notice that $x$ must be in the region of attraction $\mathcal{A}%
_{\lambda }(K_\lambda)$ of $K_{\lambda }$ since $0\in K_{\lambda }$ and $\mathcal{A}%
_{\lambda }(K_\lambda)$ is a neighborhood of $K_{\lambda }$, which implies that $xt$
must be in $\mathcal{A}_{\lambda }(K_\lambda)\setminus K_\lambda$ for certain negative value ot $t$. Since 
$\mathcal{A}_{\lambda }(K_\lambda)\setminus K_\lambda$ is invariant, the whole trajectory of  $x$ 
is in $\mathcal{A}_{\lambda }(K_\lambda)\setminus K_\lambda$ and, hence, $\emptyset\neq\omega_\lambda (x)\subset K_{\lambda }$. In addition, since $x\in W_\lambda^u(0)$,  $\omega^*(x)=\{0\}\subset K_\lambda$. As a consecuence, $K_\lambda\cup\varphi_\lambda(x,\mathbb{R})$ is a compact invariant set contained in $\mathcal{A}_\lambda(K_\lambda)$. This contradicts the stability of $K_\lambda$, since $K_\lambda$ being stable, must be the maximal compact invariant set contained in its region of attraction. We shall see now that $A_{\lambda }$ is compact. Since $%
(K_{\lambda })$ is a continuation of $\{0\}$ we have that $K_{\lambda }\subset
B_\varepsilon(0)$ for $0<\lambda<\lambda_0$ where $\varepsilon>0$ is chosen using the
ridigity of $W_{\lambda }^{s}(0)$. If $A_{\lambda }$ is not compact then
there exists a sequence of points $x_{n}\in A_{\lambda }=K_{\lambda
}\setminus W_{\lambda }^{u}(0)$ such that $x_{n}\rightarrow x$ and $x\in W_{\lambda
}^{u}(0).$ Since $K_{\lambda }$ and $W_{\lambda }^{u}(0)$ are invariant then 
$A_{\lambda }$ is also invariant and we can assume that $x_{n}\rightarrow 0$%
. This is proved as follows. Since $x\in W_{\lambda }^{u}(0)$ we have that $%
xt_{k}\rightarrow 0$ for a certain sequence $t_{k}\rightarrow -\infty $. For
every $k$ select a $x_{n_{k}}$ such that $x_{n_{k}}t_{k}$ \ is $1/k$-close
to $xt_{k}$.  Hence $x_{n_{k}}t_{k}\rightarrow 0$ with $x_{n_{k}}t_{k}\in $ 
$A_{\lambda }$. Consider now a Hartman-Grobman block $%
H_{\lambda }$ for $\varphi _{\lambda }$ contained in $B_\varepsilon(0)$. Since
the points $x_{n}$ are not in $W_{\lambda }^{u}(0)$ there exists, for each $n$, $t_n<0$ such that $x_nt_n\in\partial H_\lambda$ and $x_n[t_n,0]\subset H_\lambda$. Since $\partial H_\lambda$ is compact we may assume that $x_nt_n\to y\in\partial H_\lambda$. Notice that the sequence $t_n\to-\infty$ since, otherwise, we may assume that it converges to some $t_0\leq 0$ and, hence $x_nt_n$ converges to  $0t_0=0$  which is clearly not in $\partial H_\lambda$. Let us see that $yt\in H_\lambda$ for each $t\geq 0$ and, as a consequence, $y\in W^s_\lambda(0)$. Let $t\geq 0$, then, since $t_n\to-\infty$,  there exists $n_0$ such that $t+t_n<0$ for every $n\geq n_0$. Thus $x_n(t+t_n)\in H_\lambda$ for each $n\geq n_0$ and, since the sequence $x_n(t_n+t)$ converges to $yt$,  it follows from the compactness of $H_\lambda$ that $yt\in H_\lambda$. As a consequence, the rigidity condition ensures that the trajectory of $y$ must leave $\overline{B_\varepsilon(0)}$ and, thus, $K_\lambda$, which is in contradiction with the invariance of $K_\lambda$. This contradiction proves the compactness of $A_\lambda$.  Moreover the pair $(A_{\lambda },\{0\})$ is an attractor-repeller
 decomposition of $K_{\lambda }$. Indeed, we see that $\{0\}$ is a repeller for $\varphi_\lambda|_{K_\lambda}$. Suppose that $\{0\}$ is not a repeller for $\varphi_\lambda|_{K_\lambda}$, then \cite[Lemma~3.1]{Sal} ensures that any compact neighborhood $U$ of $0$ in $K_\lambda$ disjoint from $A_\lambda$ contains a point $x$, other than $0$, such that $\gamma^+(x)\subset U$. Since $U$ isolates $\{0\}$ in $K_\lambda$, it follows that $\omega_\lambda(x)=\{0\}$ and the rigidity condition ensures that the trajectory of $x$ must leave $K_\lambda$ in contradiction with the invariance of $K_\lambda$. Notice that $W^u_\lambda(0)$ is the region of repulsion of $\{0\}$ and, hence, $A_\lambda=K_\lambda\setminus W^u_\lambda(0)$ is its complementary attractor. Since $K_{\lambda }$ is an attractor and $A_{\lambda }$ is
an attractor in $K_{\lambda }$ then $A_{\lambda }$ is an attractor of the
flow $\varphi _{\lambda }$. We consider the attractor-repeller cohomology
sequence of the decomposition $(A_{\lambda },\{0\})$ of $K_{\lambda }$

\[
\cdots\rightarrow CH^{i-1}(K_\lambda)\rightarrow CH^{i-1}(A_\lambda)\rightarrow CH^i(\{0\})\rightarrow CH^i(K_\lambda)\rightarrow\cdots
\]

Since $K_{\lambda }$ is a continuation of the attractor $\{0\}$ of $\varphi _{0}$
we know its cohomology index and the cohomology index of $\{0\}$ for $\varphi
_{\lambda}$( because $0$ is now a hyperbolic point). We deduce from
this the cohomology index of $A_{\lambda }$ which is $\mathbb{Z}$ in
dimension $k-1$ when $k>1$ and $\mathbb{Z}\oplus\mathbb{Z}$ in dimension $0$ when $k=1$. On the other hand by the rigidity condition $\loc W_{\lambda
}^{s}(0)$ is uniformly locally flat and this implies that if we take $\delta 
$ sufficiently small we have $\overline {B_\delta(0)}$ is contained in the region of
attraction of $K_{\lambda }$ and $\overline{ B_\delta(0)}\setminus W_{\lambda
}^{s}(0)$ is homeomorphic to $\overline {B_n}\setminus \overline{B_{n-k}}$ which is homotopy equivalent to $%
S^{k-1}$. Using  the flow we can define a sequence of maps $r_k:\overline{B_\delta(0)}\setminus W_{\lambda }^{s}(0)\to\mathbb{R}^n$ by
\[
r_k(x)=\varphi_\lambda(x,k).
\]
Since $A_\lambda$ is an attractor and $\overline{B_\delta(0)}\setminus W_\lambda^s(0)$ is contained in its region of attraction, it follows that given any neighborhood $U$ of $A_\lambda$ there exists $k_0\in\mathbb{N}$ such that the image of $r_k$ is contained in $U$ for every $k\geq k_0$. In addition, the flow defines, in a natural way, a homotopy between $r_k$ and $r_{k+1}$, for each $k\geq k_0$, taking place in $U$. As a consequence, this family of maps defines an \emph{approximative sequence}
\[
\mathbf{r}=\{r_{k},\overline {B_\delta(0)}\setminus W_{\lambda }^{s}(0)\rightarrow A_{\lambda }\},
\]
in the sense of Borsuk \cite{Bormono} and, hence, a shape morphism. Since $r_k|_{A_\lambda}$ is homotopic to the identity  for each $k$, it follows that the shape morphism induced by the inclusion $i:A_\lambda\hookrightarrow\overline {B_\delta(0)}\setminus W_{\lambda }^{s}(0)$ is a left inverse for $\mathbf{r}$ and, therefore 
\[
\Sh(S^{k-1})=\Sh(\overline {B_\delta(0)}\setminus W_{\lambda }^{s}(0))\geq \Sh(A_{\lambda }). 
\]%
On the other hand, since the cohomology Conley index of $A_{\lambda }$
is $\mathbb{Z}$ in dimension $k-1$ if $k>1$ and $\mathbb{Z}\oplus\mathbb{Z}$ in dimension $0$ if $k=1$, it follows that $H^*(A_{\lambda })\neq H^*(\{*\}).$  Now since $\Sh(S^{k-1})\geq \Sh(A_{\lambda
})$ and $H^{*}(A_{\lambda })\neq H^*(\{*\})$ Borsuk-Holszty\'nski Theorem \cite{Bor-Hol}, which ensures that if a compactum $K$ satisfies that $\Sh(K)\leq \Sh(S^n)$ and  $K$ does not have the shape of a point then $\Sh(K)=\Sh(S^n)$, applies and we
have that, in fact, $\Sh(S^{k-1})=\Sh(A_{\lambda })$. A direct consequence of this fact is that the Conley index of $A_\lambda$ is the homotopy type of $(S^{k-1}\cup \{*\},*)$.

From the previous discussion it readily follows that $CH^i(A_\lambda)$ is $\mathbb{Z}$ for $i=0,k-1$ and zero otherwise when $k>1$ and $\mathbb{Z}\oplus\mathbb{Z}$ for $i=0$ an zero otherwise when $k=1$, $CH^i(K_\lambda)$ is $\mathbb{Z}$ for $i=0$ and zero otherwise and $CH^i(\{0\})$ is $\mathbb{Z}$ for $i=k-1$ and zero otherwise. Combining all of this we get the desired Morse equations for the attractor-repeller decomposition $(A_\lambda,0)$ of $K_\lambda$. 
\end{proof}

\begin{remark}
Our previous result can be looked at as describing either a generalized
pitchfork bifurcation or a generalized Poincar\'{e}-Andronov-Hopf
bifurcation of arbitrary codimension.
\end{remark}

\section{Transient chaos}\label{sec:preturbulence}

For a parameter value approximately equal to $13.926...$, the behaviour of
the flow experiments an important change. At this critical value the stable
manifold of the origin includes the unstable manifold of the origin; i.e.
trajectories started in the unstable manifold of the origin tend, in both
positive and negative time, to the origin. As a consequence, a couple of
homoclinic orbits are produced, one for every branch of the unstable
manifold and we say that a \textit{homoclinic bifurcation }has taken place
at the parameter value $r_{H}=$ $13.926...$ This parameter value signals the
appearance of a phenomenon known as \textit{preturbulence} or \emph{transient chaos}, whose study was
carried out by Kaplan and Yorke and by Yorke and Yorke in \cite{K-Y, YY}. This
phenomenon is characterized by the fact that certain trajectories behave
chaotically for a while, before escaping to an external attractor. Turbulent
trajectories also exist but represent a set of measure zero. By using
arguments similar, to a certain extent, to Smale's horseshoe \cite{SmBu} they proved
that for $r>r_{H}$ \ a \ countable infinity of periodic orbits is created
together with an uncountable infinity of bounded trajectories that are
asymptotically periodic (in either forwards of backwards time) and an
uncountable infinity of bounded aperiodic trajectories. These aperiodic
trajectories were termed as \textit{turbulent} by Ruelle and Takens \cite{R-T}
because their limit sets are neither points, nor periodic orbits, nor
manifolds. Sparrow remarked that also an uncountable infinity of \textit{%
bounded trajectories which terminate in the origin }is produced. The union
of all these trajectories together with the origin forms an invariant
``strange set'' $K_{r}$ which exhibits sensitive dependence on initial
conditions. By studying a return map of the flow with respect to a suitable
Poincar\'{e} section, Sparrow proved, relying on Kaplan and Yorke's results,
that the intersections of the trajectories of $K_{r}$ with the Poincar\'{e}
section can be coded by bisequences of two symbols $S$ and $T$ such that
repeating sequences correspond to periodic orbits, sequences which terminate
on the right correspond to trajectories which terminate in the origin and
aperiodic sequences correspond to trajectories which oscilate aperiodically.

In the sequel we analyze the nature and the evolution of the strange sets $%
K_{r}$ from the point of view of Conley's index theory. To get our
conclusions, we use some facts that have been established by Kaplan-Yorke \cite{K-Y}
and Sparrow \cite{Spa}.

\subsection*{1}  \textbf{The strange sets are isolated invariant sets and they define a
continuation (in the sense of Conley's theory) of the double homoclinic loop.}%

Every $K_{r}$ is a compact isolated invariant set. As a matter of fact, if we
take a neighborhood $\mathcal{N}_{r}$ of the double homoclinic loop,
consisting of a small box $B$ around the origin, together with two
tubes, $S$ and $T$ around the two branches of the loop (see \cite[Appendix D, pg. 199]{Spa}), we have that $K_{r}$
is the maximal invariant set inside this neighborhood for values of $r$
close to that of the homoclinic bifurcation. The passage of the trajectories
of $K_{r}$ through the tubes is in correspondence with the codification with
two symbols previously stated, and this is the explanation for the use of
the same notation. We clearly have that the family $(K_{r})$, for $r>r_{H}$,
is a continuation (in the sense of Conley's theory), of the double
homoclinic loop which originates the homoclinic bifurcation at $r=r_{H}$ .

\subsection*{2} \textbf{The continuation is continuous in the Hausdorff metric for $
r=r_{H}$}.

As a matter of fact, each tube $S$ and $T$ contains exactly one periodic
orbit which does not wind around the $z-$axis. The notation $S$ and $T$ is
also used to designate these two simplest orbits. Then, if we fix $\varepsilon
>0$ we have that the neighborhood $\mathcal{N}_{r}$ can be chosen to be 
contained in the $\varepsilon -$neighborhood of the double homoclinic loop for
values of $r$ sufficiently close to $r_{H}$ and, hence, so is $K_r$. On the other hand, the $%
\varepsilon -$neighborhood of the orbits $S$ and $T$ (and hence the $\varepsilon -$%
neighborhood of $K_{r}$) contains the double homoclinic loop for $r$
sufficiently close to $r_{H}$. This proves that $K_{r}$ converges to the
double homoclinic loop when $r\rightarrow r_{H}$.

\subsection*{3} \textbf{The strange sets have the cohomological Conley index of the circle.}

The cohomological Conley index of $K_{r}$ is isomorphic to $H^{\ast
}(S^{1},\ast )$, where $S^{1}$ is the circle. This is a consequence of the
fact that the origin $\{0\}$ is a continuation of the double homoclinic loop
for $r<r_{H}$ . Since the cohomological Conley index is preserved by
continuation and the index of the origin is isomorphic to $H^{\ast
}(S^{1},\ast )$ then the index of the double homoclinic loop and also that
of its continuation $K_{r}$ for $r>r_{H}$ must be the same.

\subsection*{4}  \textbf{The strange sets are repellers in an attractor-repeller
decomposition of the global attractor  $\Omega _{r}$ of the flow.}

The strange set $K_{r}$ is contained in the global attractor $\Omega _{r}$.
Since all the trajectories in $\Omega _{r}$ not contained in $K_{r}$
terminate in $C_{1}$ or $C_{2}$ we must have that the $\omega ^{\ast }-$%
limit of these trajectories (else than $C_{1}$ or $C_{2}$) must be contained
in $K_{r}$. As a consequence $(\{C_{1,}C_{2}\},K_{r})$ is an
attractor-repeller decomposition of the global attractor $\Omega _{r}$.

\subsection*{5} \textbf{The strange sets $K_{r}$ are not chaotic but they admit
an attractor-repeller decomposition $(\{0\},L_{r})$  where $L_{r}$ 
is chaotic. The set $L_{r}$ is the suspension of a Smale
horseshoe but the strange set $K_{r}$ is not.}

Contrarily to some statements in the literature, the strange set $K_{r}$ is
not chaotic, since there is not a single trajectory in $K_{r}$ whose closure
contains the trajectories terminating in the origin. This was remarked by
Sparrow in \cite{Spa}. However, if we consider all the trajectories in $K_{r}$
except those terminanting in the origin we obtain a chaotic invariant set $%
L_{r}$. As a matter of fact, this was the set discovered and studied by
Kaplan and Yorke in \cite{K-Y} where they proved that $L_{r}$ has sensitive
dependence on initial conditions, the set of periodic orbits is dense in $%
L_{r}$ and it contains an uncountable infinity of aperiodic dense
trajectories. This set is the suspension of a return map of the flow with
respect to a suitable Poincar\'{e} section studied by Sparrow, whose
dynamics is that of the Smale horseshoe. On the other hand the existence of
a fixed point in $K_{r}$ prevents the strange set from being a suspension.
As we prove in our next result, $L_{r}$ is an isolated invariant set with
trivial cohomological index and the pair $(\{0\},L_{r})$ defines an
attractor-repeller decomposition of $K_{r}$. We deduce from this that 
$\{\{C_{1,}C_{2}\},\{0\},L_{r}\}$ is a Morse decomposition of the
global attractor $\Omega _{r}$. The Morse equations of this decomposition
are obtained also in our next result, where we analyze a situation which is
more general than the one described here.

\subsection*{6} \textbf{The strange sets $K_{r}$ have the cohomology of the
figure eight.}

In spite of its dynamical and topological complexity, the strange set $K_{r}$
has the cohomology of the figure eight. This is a consequence of a more
general result proved in our next theorem.

\bigskip

Our study of the evolution of the strange set concerns mainly asymptotic
properties of its internal structure and of the structure of the global
attractor of the flow. Recently, E.J. Doedel, B. Krauskopf and H.M. Osinga
\cite{DKO} have performed a study of the global organization
of the phase space in the transition to chaos where they show how global
invariant manifolds of equilibria and periodic orbits change with the
parameters.

\bigskip

The following is a result of a general nature which has been suggested by
the previous discussion on the evolution of the Lorenz strange set. Some of
the remarks previously made are consequences of this theorem.

\begin{theorem}
Let $\varphi _{\lambda }:\mathbb{R}^{3}\times \mathbb{R}\rightarrow \mathbb{R%
}^{3}$ be a dissipative family of flows induced by a system $\dot{X}%
=F_{\lambda }(X)$ of ODE in $\mathbb{R}^{3}$ depending differentiably on a
parameter $\lambda \in \lbrack 0,1]$. Suppose that $0$ is a hyperbolic
equilibrium for every $\lambda $ with exactly one positive and two negative
eigenvalues and that there are two other hyperbolic equilibria $C_{1}$ and $%
C_{2}$, both of them having one real negative eigenvalue $\beta _{\lambda }$
and two conjugate complex eigenvalues $\mu _{\lambda }\pm \upsilon _{\lambda
}i$ with $\mu _{\lambda }<0$ for every $\lambda $. Suppose that for $\lambda
=0$ the fixed point $0$ has two homoclinic trajectories corresponding with
the two branches of its unstable manifold and that the points $C_{1}$ and $%
C_{2}$ attract all bounded orbits of $\mathbb{R}^{3}$ not lying in $%
W_{0}^{s}(0)$ and suppose, additionally, that for $\lambda >0$ the two
branches of $W_{\lambda }^{u}(0)$ connect the point $0$ with $C_{1}$ and $%
C_{2}$ respectively and that $W_{\lambda }^{s}(0)\setminus\{0\}$ contains at least one 
bounded orbit. Then:

\begin{enumerate}

\item[a)] For $\lambda =0$, the $\omega ^{\ast }$-limit of every bounded orbit
different from the stationary orbits $C_{1}$ and $C_{2}$ is contained in the
double homoclinic loop $W_{0}^{u}(0)$.

\item [b)] For $\lambda >0$ the set of bounded trajectories of $\varphi _{\lambda }$
other than those finishing in $C_{1}$ or $C_{2}$ is a non-empty isolated invariant set 
$K_{\lambda }$ whose cohomology Conley index is isomorphic to $H^{\ast
}(S^{1},\ast )$. Moreover $(\{C_{1,}C_{2}\},K_{\lambda })$ is an
attractor-repeller decomposition of the global attractor $\Omega _{\lambda }$
and $K_{\lambda }$ itself has a finer attractor-repeller decomposition $%
(\{0\},L_{\lambda })$ where $L_{\lambda }$ consists of all bounded
trajectories not ending neither in the origin nor in $C_{1}$ or $C_{2}$. The
set $L_{\lambda }$ has trivial cohomology index and the triple $%
\{\{C_{1,}C_{2}\},\{0\},L_{\lambda }\}$ is a Morse decomposition of the global
attractor whose Morse equations are
\[
2+t=1+(1+t).
\]
\item[c)] If the complex invariant manifolds of the points $C_1$ and $C_2$ consist of all the bounded orbits
finishing in $C_{1}$ and $C_{2}$ respectively (as is the case in the Lorenz equations)
then the cohomology of $K_{\lambda }$ agrees with that of the figure eight.
\end{enumerate}
\end{theorem}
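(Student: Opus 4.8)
The plan is to anchor everything on two nested attractor--repeller decompositions of the global attractor $\Omega_\lambda$ and to read off the Conley indices and Morse equations from the corresponding cohomology sequences; part (a) is a limit-set computation that also underlies these decompositions, while part (c) is a \v Cech-cohomology computation on box-and-tubes isolating neighbourhoods.

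For (a), fix a bounded orbit through $x\neq C_1,C_2$ and analyse $\omega^*(x)$, which is nonempty, compact, connected and invariant. First I would show $C_1,C_2\notin\omega^*(x)$: since each $C_i$ has all eigenvalues with negative real part it is asymptotically stable, hence locally repelling in backward time, so it cannot lie in the $\alpha$-limit of any point other than itself. Next, for $y\in\omega^*(x)$ the whole orbit $\gamma(y)$ is bounded and contained in $\omega^*(x)$; if $y\notin W_0^s(0)$ the hypothesis forces $\omega(y)\in\{\{C_1\},\{C_2\}\}\subset\omega^*(x)$, contradicting the previous step. Hence $\omega^*(x)\subset W_0^s(0)$, and since every point of $W_0^s(0)$ flows forward to $0$ we get $0\in\omega^*(x)$. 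Finally, restricting to the invariant surface $W_0^s(0)$, a Poincar\'e--Bendixson-type argument (equivalently Hartman--Grobman together with the $\lambda$-lemma at the saddle $0$) shows that the only compact invariant subsets of $W_0^s(0)$ are $\{0\}$ and the double homoclinic loop $W_0^u(0)\cup\{0\}$; as $\omega^*(x)$ is one of them, $\omega^*(x)\subset W_0^u(0)$, which is (a).

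For (b), I would first note $K_\lambda\neq\emptyset$: a bounded orbit in $W_\lambda^s(0)$ flows to $0$, not to $C_i$, so it lies in $K_\lambda$, as does $0$. Since $C_1,C_2$ are sinks they form an attractor in $\Omega_\lambda$, and by hypothesis every bounded orbit either finishes in $C_1,C_2$ or belongs to $K_\lambda$; thus $K_\lambda$ is exactly the complementary repeller, so $(\{C_1,C_2\},K_\lambda)$ is an attractor--repeller decomposition of $\Omega_\lambda$ and $K_\lambda$ is an isolated invariant set. Feeding the known indices $CH^*(\Omega_\lambda)\cong H^*(\mathrm{pt})$ (global attractor) and $CH^*(\{C_1,C_2\})\cong H^*(\mathrm{pt})\oplus H^*(\mathrm{pt})$ into the attractor--repeller cohomology sequence, the map $CH^0(\Omega_\lambda)=\mathbb{Z}\to CH^0(\{C_1,C_2\})=\mathbb{Z}^2$ is the diagonal (connected total, two components), so its kernel vanishes and its cokernel is $\mathbb{Z}$, giving $CH^*(K_\lambda)\cong H^*(S^1,\ast)$. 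For the finer decomposition I would observe that the two branches of $W_\lambda^u(0)$ leave $K_\lambda$ toward $C_1,C_2$, so inside $K_\lambda$ the origin has no unstable directions and $\{0\}$ is an attractor there whose complementary repeller is precisely $L_\lambda=\{x\in K_\lambda : 0\notin\omega(x)\}$, the bounded orbits ending neither in $0$ nor in $C_1,C_2$. Running the attractor--repeller sequence of $(\{0\},L_\lambda)$ in $K_\lambda$, with $CH^*(\{0\})\cong CH^*(K_\lambda)\cong H^*(S^1,\ast)$ concentrated in degree $1$, the connecting map $CH^1(K_\lambda)\to CH^1(\{0\})$ is an isomorphism because the generator of the index of $K_\lambda$ is carried by the one-dimensional unstable direction of $0$; hence $CH^*(L_\lambda)=0$. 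Concatenating the two decompositions gives the Morse decomposition $\{C_1,C_2\}<\{0\}<L_\lambda$ of $\Omega_\lambda$, and summing Poincar\'e polynomials $P_t(h(\{C_1,C_2\}))+P_t(h(\{0\}))+P_t(h(L_\lambda))=2+t+0$ against $P_t(h(\Omega_\lambda))=1$ yields the Morse equation $2+t=1+(1+t)$.

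For (c), the extra hypothesis identifies the bounded orbits finishing in $C_1,C_2$ with the complex invariant manifolds $W^{\mathbb{C}}(C_1),W^{\mathbb{C}}(C_2)$, whose spiralling $2$-disk structure lets one choose isolating neighbourhoods of $K_\lambda$ of the box-and-two-tubes type that deformation retract onto a figure eight, with the inclusions between successively smaller ones inducing isomorphisms on cohomology. By the continuity (tautness) of \v Cech cohomology, $H^*(K_\lambda)\cong\varinjlim H^*(N)$ over such isolating neighbourhoods $N$, so $H^*(K_\lambda)\cong H^*(S^1\vee S^1)$, the cohomology of the figure eight. The main obstacle is exactly this last step: showing, using the complex-manifold hypothesis, that the tubes close up without introducing extra one-dimensional cohomology, so that the isolating neighbourhoods genuinely have the homotopy type of the figure eight. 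The identification of the connecting map in the finer sequence of (b) (equivalently, the vanishing of the cohomology index of the suspended horseshoe $L_\lambda$) is the other delicate point, and I would secure it either by the isomorphism argument above or by isolating $L_\lambda$ in the tubes away from the box and computing the index of the suspension directly.
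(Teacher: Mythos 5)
Your overall architecture---the nested attractor--repeller decompositions $(\{C_1,C_2\},K_\lambda)$ and $(\{0\},L_\lambda)$, their cohomology exact sequences, and the Morse equations read off from the Poincar\'e polynomials---matches the paper, and your direct computation of $CH^*(K_\lambda)$ from the sequence of $(\{C_1,C_2\},K_\lambda)$ (rather than by continuation from the double homoclinic loop at $\lambda=0$, which is what the paper does) is a legitimate shortcut. But the two steps you yourself flag as delicate are genuine gaps, not loose ends. The vanishing of $CH^1(L_\lambda)$ and $CH^2(L_\lambda)$ requires the map $i^*\colon CH^1(K_\lambda)\to CH^1(\{0\})$, a homomorphism $\mathbb{Z}\to\mathbb{Z}$, to be an \emph{isomorphism}: injectivity alone kills $CH^1(L_\lambda)$ but leaves $CH^2(L_\lambda)$ a possibly nonzero finite cyclic cokernel. ``The generator of the index of $K_\lambda$ is carried by the unstable direction of $0$'' is not an argument, and your fallback (computing the index of $L_\lambda$ directly as a suspended horseshoe) is unavailable at the stated level of generality, where $L_\lambda$ is merely defined as a set of bounded orbits. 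The paper closes this point by taking an index filtration $N_0\subset N_1\subset N$ for the Morse decomposition, with $N$ an acyclic positively invariant neighbourhood of $\Omega_\lambda$, writing the commutative ladder of short exact sequences $0\to\widetilde{H}^0(N_0)\to H^1(\cdot,N_0)\to H^1(\cdot)\to 0$ for $N$ and $N_1$, and showing $H^1(N_1)=0$ because it is simultaneously torsion free (universal coefficients) and a proper quotient of $\mathbb{Z}$.

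For (c), your plan---exhibit a cofinal family of box-and-tubes isolating neighbourhoods of $K_\lambda$ homotopy equivalent to the figure eight, with bonding maps inducing isomorphisms, and invoke continuity of \v{C}ech cohomology---is exactly the step you admit you cannot carry out, and it is the entire content of the claim; nothing in the complex-manifold hypothesis obviously controls the topology of small neighbourhoods of the strange set. The paper avoids constructing any neighbourhood of $K_\lambda$ at all: the hypothesis yields positively invariant closed $2$-disks $N_1,N_2$ around $C_1,C_2$ \emph{inside} $\Omega_\lambda$; one computes $H^*(\Omega_\lambda,N_1\cup N_2)$ from the acyclicity of $\Omega_\lambda$, excises smaller disks $\hat N_i$, identifies $H^*(K_\lambda)$ with $H^*(\Omega_\lambda\setminus\hat N)$ because the latter is a negatively invariant neighbourhood of the repeller $K_\lambda$ in $\Omega_\lambda$, and finishes with the exact sequence of the pair $(\Omega_\lambda\setminus\hat N,\,N\setminus\hat N)$, where $N\setminus\hat N$ is homotopy equivalent to two disjoint circles. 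A smaller issue: in (a) you reduce to $\omega^*(x)\subset W_0^s(0)$ and then assert, via a ``Poincar\'e--Bendixson-type argument,'' that the only compact invariant subsets of $W_0^s(0)$ are $\{0\}$ and the double loop; Poincar\'e--Bendixson does not apply off the shelf to an immersed plane whose intrinsic topology differs from the subspace topology, so this step also needs justification (the paper instead argues directly that any bounded orbit in $\omega^*(x)$ outside the double loop would violate the hypothesis that $C_1,C_2$ attract all bounded orbits off $W_0^s(0)$).
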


\begin{proof}
To prove part a) consider the global attractor $\Omega _{0}$ of the flow $%
\varphi _{0}$. Since $\{C_{1,}C_{2}\}$ is an attractor contained in $\Omega
_{0}$ there exists a dual repeller for the flow $\varphi _{0}$ restricted to 
$\Omega _{0}$. Obviously the double loop $W_{0}^{u}(0)$ is contained in this
repeller. Moreover, for every point $x\in \Omega _{0}$ with $x\neq C_i$, $i=1,2$, we have that $\emptyset\neq\omega ^{\ast }(x)\subset W_{0}^{u}(0)$ since,
otherwise, there would be a bounded orbit in $\omega ^{\ast }(x)$ not lying
in $W_{0}^{s}(0)$ \ and not attracted by $C_{1}$ or $C_{2}$, contrarily to
our hypothesis. As a consequence, $W_{0}^{u}(0)$ is, in fact, the dual
repeller of $\{C_{1,}C_{2}\}$ for the flow $\varphi _{0}$ restricted to $%
\Omega _{0}$.

To prove part b) we use the fact that the attractor-repeller decomposition $%
(\{C_{1,}C_{2}\},W_{0}^{u}(0))$ of $\Omega _{0}$ has a continuation to an
attractor-repeller decomposition of the global attractor $\Omega _{\lambda }$
of the flow $\varphi _{\lambda }$. The continuation of $\{C_{1,}C_{2}\}$ is
the attractor $\{C_{1,}C_{2}\}$ itself. And the continuation of the repeller 
$W_{0}^{u}(0)$ is the set $K_{\lambda }$ formed by the union of all bounded
orbits not ending in $C_{1,}$or $C_{2}$, which is the dual repeller of $%
\{C_{1,}C_{2}\}$ for the restriction of $\varphi _{\lambda }$ to $\Omega
_{\lambda }$. Since the Conley index continues, the cohomology index of $%
K_{\lambda }$ for the flow $\varphi _{\lambda }$ must agree with that of $%
W_{0}^{u}(0)$ for the flow $\varphi _{0}$. We see that the cohomology index of $W_{0}^{u}(0)$ is isomorphic to 
$H^{\ast }(S^{1},\ast )$. Let $N$ be a compact manifold with boundary which is a positively invariant neighborhood of the global attractor $\Omega_0$. It is possible to get such a neighborhood by using a Lyapunov function. Then, $(N,\emptyset)$ is an index pair for $\Omega_0$. Notice that by \cite[Theorem~3.6]{KapRod} $N$ is acyclic. Since $(\{C_1, C_2\},W^u(0))$ is an attractor-repeller decomposition for $\Omega_0$  \cite[Corollary~4.4]{Sal} ensures the existence of  $N_0\subset N$ such that  $(N_0,\emptyset)$ is an index pair for  $\{C_1, C_2\}$ and $(N,N_0)$ is an index pair for $W^u(0)$. Taking into account that $N_0$ is a positively invariant neighborhood of $\{C_1, C_2\}$, \cite[Theorem~3.6]{KapRod} it follows that $H^k(N_0)$ is isomorphic to $\mathbb{Z}\oplus \mathbb{Z}$ for $k=0$ and zero otherwise. Consider the long exact sequence of reduced cohomology of the pair $(N,N_0)$
\[
\cdots \to\widetilde{H}^k(N,N_0)\to \widetilde{H}^k(N)\to \widetilde{H}^k(N_0)\to \cdots
\]
This exact sequence, together with the previous discussion, ensure that $H^*(N,N_0)$ is isomorphic to $0$ if $i\neq 1$.  On the other hand, for $i=1$ we have
\[
0\cong\widetilde{H}^0(N)\to \mathbb{Z}\cong\widetilde{H}^0(N_0)\xrightarrow{\partial}\widetilde{H}^1(N,N_0)\to \widetilde{H}^1(N)\cong\{0\}
\]
hence, $\partial$ is an isomorphism and, as a consequence, $H^1(N,N_0)\cong\mathbb{Z}$.

Now consider the subset $L_{\lambda }$ of $K_{\lambda }$ consisting of all
bounded trajectories of $\varphi _{\lambda }$ not ending neither in the
origin nor in $C_{1}$ or $C_{2}$. We shall prove that $L_{\lambda }$ is a
repeller for the flow $\varphi _{\lambda }$ restricted to $K_{\lambda }$. We
remark that $W_{\lambda }^{u}(0)\cap K_{\lambda }=\{0\}$ for $\lambda >0$
since the two branches of $W_{\lambda }^{u}(0)$ connect the point $0$ with $%
C_{1}$ and $C_{2}$ respectively and the stationary points $C_{1}$ and $C_{2}$
do not belong to $K_{\lambda }$. Since $0$ is a hyperbolic equilibrium for
every $\lambda $ with exactly one positive and two negative eigenvalues,it possesses a 
Hartman-Grobman block $%
H_\lambda$ of $0$ (which can be arbitrarily small)$.$ We claim that there exists an $%
\varepsilon >0$ such that for every $x\in H_\lambda\cap $ $K_{\lambda }$ with $x\in
B_\varepsilon(0)$ its positive semitrajectory $\gamma^{+}(x)$ is contained in 
$H_\lambda$ and, hence, ends in $0$. Otherwise there is a sequence of points $%
x_{n}\in K_{\lambda }$, $x_{n}\rightarrow 0$, such that $\gamma ^{+}(x_{n})$
leaves $H_\lambda$. This produces an orbit in $K_{\lambda }$ which leaves $H_\lambda$ in the
future and whose $\omega ^{\ast }$-limit is $\{0\}$, which is in contradiction
with the fact that $W_{\lambda }^{u}(0)\cap K_{\lambda }=\{0\}$. Then there
is an $\varepsilon >0$ such that all points of $H_\lambda\cap K_{\lambda }$ contained
in the ball $B_\varepsilon(0)$ go to $0$. Hence, we have a neighborhood $H_\lambda\cap
K_{\lambda }\cap B_\varepsilon(0)$ of $0$ in $K_{\lambda }$ attracted by $%
\{0\} $ and such that the orbits of its points do not leave $H_\lambda$ in the
future. This proves that $\{0\}$ is an attractor in $K_{\lambda }$ whose dual
repeller is obviously $L_{\lambda }$.  The
attractor-repeller cohomology exact sequence of the decomposition $(\{0\},L_{\lambda })$
of $K_{\lambda }$ takes de form
\[
\cdots\xrightarrow{\partial} CH^*(L_\lambda)\xrightarrow{j^*} CH^*(K_\lambda)\xrightarrow{i^*} CH^*(\{0\})\xrightarrow{\partial}\cdots
\]
 and, taking into account that the cohomology indices of $%
\{0\} $ and $K_{\lambda }$ are both $H^{\ast }(S^{1},\ast )$, we readily get
that $CH^i(L_{\lambda })$ is trivial for $i\neq 1,2$. To see that $CH^i(L_\lambda)$ is trivial for $i=1,2$ we analyse the following segment of the long exact sequence
\[
0\to CH^1(L_\lambda)\xrightarrow{j^*} CH^1(K_\lambda)\xrightarrow{i^*} CH^1(\{0\})\xrightarrow{\partial} CH^2(L_\lambda)\to 0
\]

Let us see that $i^*$ is an isomorphism. Let $\bar{N}$ be a compact manifold with boundary which is a positively invariant neighborhood of the global attractor $\Omega_\lambda$. Then, $(\bar{N},\emptyset)$ is an index pair for $\Omega_\lambda$. Since $(\{C_1, C_2\},K_\lambda)$ is an attractor-repeller decomposition for $\Omega_\lambda$ and $(\{0\},L_\lambda)$ is an attractor-repeller decomposition of $K_\lambda$, it easily follows that $\{\{C_1, C_2\},\{0\},L_\lambda\}$ is a Morse decomposition of $\Omega_\lambda$. Hence, \cite[Corollary~4.4]{Sal} ensures the existence of a filtration $\bar{N}_0\subset \bar{N}_1\subset\bar{N}$ such that  $(\bar{N}_0,\emptyset)$ is an index pair for  $\{C_1, C_2\}$, $(\bar{N},\bar{N}_0)$ is an index pair for $K_\lambda$,  $(\bar{N},\bar{N}_1)$ is an index pair for $L_\lambda$ and $(\bar{N}_1,\bar{N}_0)$ is an index pair for $\{0\}$. Notice that the homomorphism $i^*$ is induced by the inclusion $i:(\bar{N}_1,\bar{N}_0)\hookrightarrow(\bar{N},\bar{N}_0)$. Taking into account that $\bar{N}_0$ is a positively invariant neighborhood of $\{C_1, C_2\}$, \cite[Theorem~3.6]{KapRod} ensures that this inclusion induces the following conmutative diagram of short exact sequences in cohomology
\[
\begin{CD}
0@>>>\mathbb{Z}\cong \widetilde{H}^0(\bar{N}_0)@>\partial>> \mathbb{Z}\cong H^1(\bar{N},\bar{N}_0) @>>>H^1(\bar{N})@>>>0\\
&& @VVV  @Vi^*VV @VVV \\
0@>>> \mathbb{Z}\cong \widetilde{H}^0(\bar{N}_0)@>\bar{\partial}>> \mathbb{Z}\cong H^1(\bar{N}_1,\bar{N}_0) @>>>H^1(\bar{N}_1)@>>>0
\end{CD}
\]
Since $H^1(\bar{N})=0$ by \cite[Theorem~3.6]{KapRod}, it follows that $\partial$ is an isomorphism. Let us see that $\bar{\partial}$ is also an isomorphism. From the fact that the lower right arrow is an epimorphism, it follows that $H^1(\bar{N}_1)$ is either $0$, $\mathbb{Z}$ or a finite cyclic group. The exactness of the second row ensures that $\bar{\partial}$ is a monomorphism and, hence, the lower right arrow cannot be an isomorphism.  As a consequence $H^1(\bar{N}_1)$ cannot be $\mathbb{Z}$. In addition, the Universal Coefficient Theorem ensures that $H^1(\bar{N}_1)$ must be torsion free and, as a consequence, it cannot be finite cyclic either. Hence, $H^1(\bar{N}_1)=0$ and $\bar{\partial}$ is also an isomorphism. By combining this with the fact that the leftmost vertical arrow is the identity homomorphism, it follows that $i^*$ is  an isomorphism and, hence, it readily follows, from the exactness of the attractor-repeller sequence, that $CH^i(L_\lambda)=0$ for $i=1,2$.

We see that the Morse equations of the decomposition $\{\{C_{1},C_{2}\},\{0\},L_{\lambda
}\}$ of the global attractor $\Omega _{\lambda }$ are
\[
2+t=1+(1+t).
\] 
  Since $\{C_1, C_2\}$ is an attractor consisting of two fixed points and $N_1$ is a positively invariant neighborhood, it easily follows that $CH^*(\{C_1,C_2\})\cong H^*(S^0)$ which contributes  with the term $2$ of the lefthand side of the equation. The term $t$ of the lefthand side comes from the fact that $CH^*(\{0\})\cong H^*(S^1,*)$ is a hyperbolic fixed point with one real positve eigenvalue and two complex conjugate eigenvalues with negative real part.  $L_\lambda$ does not contribute to the equations since its cohomology index is trivial. Finally, the term $1$ from the righthand side of the equation comes from the fact that $CH^*(\Omega_\lambda)\cong H^*(\bar{N})$ which is acyclic by \cite[Theorem~3.6]{KapRod}.   

To prove part c) we remark that our hypothesis ensures the existence of
arbitrarily small positively invariant neighborhoods $\widetilde{N}_{1}$ and $\widetilde{N}_{2}$ of $%
C_{1}$ and $C_{2}$ in $\Omega _{\lambda }$ that are topological closed disks. If we
call $\widetilde{N}=\widetilde{N}_1\cup \widetilde{N}_2$ and consider the exact cohomology sequence of the pair $(\Omega _{\lambda
},\widetilde{N})$ we readily see that $H^{k}(\Omega _{\lambda
},\widetilde{N})=\{0\}$ for every $k\neq 1$ and $H^{1}(\Omega _{\lambda },\widetilde{N})=\mathbb{Z}$. Now consider smaller positively invariant closed disks $\hat{N}_{1}$ and $\hat{N}_{2}$ contained in the interiors of $\widetilde{N}_{1}$ and $\\widetilde{N}_{2}$ respectively. By excision $H^{k}(\Omega
_{\lambda },\widetilde{N})\cong H^{k}(\Omega _{\lambda }\setminus\hat{N},\widetilde{N}\setminus\hat{N})$,
where $\hat{N}=\hat{N}_{1}\cup \hat{N}_{2}.$ By the choice of the disks $%
\hat{N}_{1}$ and $\hat{N}_{2}$ we have that $\Omega _{\lambda }\setminus\hat{N}$ is
negatively invariant and, since $K_{\lambda }$ is the complementary repeller of $\{C_1, C_2\}$ in $\Omega
_{\lambda }$, the  cohomology of $K_{\lambda }$ agrees with that of $\Omega
_{\lambda }\setminus\hat{N}$ (see \cite[Theorem~3.6]{KapRod}). By combining this with the fact that  $\Omega_\lambda\setminus \hat{N}$ is connected, since otherwise $\hat{N}_i$ would disconnect $\widetilde{N}_i$ for $i=1,2$, it follows that $K_\lambda$ is connected. If we consider now the exact cohomology
sequence of the pair $(\Omega _{\lambda }\setminus\hat{N},\widetilde{N}\setminus\hat{N})$ 
\[
\cdots\rightarrow H^{k-1}(\widetilde{N}\setminus\hat{N})\rightarrow H^{k}(\Omega _{\lambda }\setminus\hat{N},\widetilde{N}\setminus\hat{N})\rightarrow H^{k}(\Omega _{\lambda }\setminus\hat{N}%
)\rightarrow H^{k}(\widetilde{N}\setminus\hat{N})
\rightarrow \cdots
\]%
and take into account that $\widetilde{N}\setminus\hat{N}$ is homotopy equivalent to the union
of two disjoint circles we readily get that the homology of $K_{\lambda }$
is that of the figure eight.
\end{proof}

 There is some recent
literature dedicated to the study of \textit{transient chaos. }According to
Cape\'{a}ns, Sabuco, Sanju\'{a}n and Yorke \cite{CaSaSanYo} ``this is a characteristic
behaviour in nonlinear dynamics where trajectories in a certain region of
phase space behave chaotically for a while, before escaping to an external
attractor. In some situations the escapes are highly undesirable, so that
it would be necessary to avoid such a situation''. These authors have
developed control methods which prevent the escapes of the trajectories to
external attractors, in such a way that they stay in the chaotic region
forever. See \cite{DaYo, DhLa,CaSaSan,LoSan, SaSanYo, ScJe, ZaSaYo} for some contributions on this subject.

\section{Travelling repellers: the creation and evolution of the Lorenz
attractor}

The attractor-repeller decomposition $(\{0\},L_{r})$ of the strange set
ceases to exist at $r= 24.06$, when the two branches of the unstable
manifold of the origin are absorbed by $K_{r}$. As a matter of fact, they
asymptotically converge (\textit{only at this value of} $r)$ to the original
periodic orbits $S$ and $T$, responsible in the future for the Hopf
bifurcation. Immediately afterwards, the strange set $K_{r}$ expels the
simple periodic orbits $S$ and $T$ and it becomes an attractor (the Lorenz
attractor), while the unstable manifold of the origin remains in $K_{r}$. We
remark, however, that at the parameter value $r= 24.06$ the strange
set $K_{r}$ is still a repeller relative to the flow restricted to the
global attractor $\Omega _{r}$. Hence, the creation of the Lorenz attractor
is the result of a repeller-attractor bifurcation in $\Omega _{r}$ at $%
r= 24.06$.

The Conley index theory tells us that if we restrict ourselves to the
consideration of the flow $\varphi _{r}|_{\Omega _{r}}$, then the repeller $%
K_{24.06}$ \textit{continues} to a family of repellers $\hat{K}_{r}$ for
parameter values $r>24.06$. The Lorenz attractor $K_{r}$ is a proper subset
of $\hat{K}_{r}$, and it must have a dual repeller $R_{r}$. This repeller is
the union of the two original periodic orbits $S$ and $T$. We then have an
attractor-repeller decomposition $(K_{r},R_{r})$ of $\hat{K}_{r}$ for $%
r>24.06$. This discussion can be summarized as follows.

\subsection*{1} \textbf{If we consider the flow restricted to the global attractor }$%
\Omega _{r}$ \textbf{then the Lorenz attractor is created at a
repeller-attractor bifurcation of the strange set }$K_{r}$ \textbf{at the
parameter value} $r= 24.06$\textbf{:} \textbf{the strange set }$K_{r}$ 
\textbf{is a repeller for} $r= 24.06$ \textbf{and is an attractor for} 
$r>24.06$. \textbf{The continuation }$\hat{K}_{r}$ \textbf{of }$K_{24.06}$
for $r>24.06$ \textbf{is a repeller for }$\varphi _{r}|_{\Omega _{r}}$\textbf{\
which contains the Lorenz attractor} $K_{r}$ \textbf{and has an
attractor-repeller decomposition} $(K_{r},R_{r})$\textbf{, where }$R_{r}$ 
\textbf{is the union of the two original periodic orbits} $S$ \textbf{and} $%
T $.

We remark that the creation of the repeller $R_{r}$ is a necessary
consequence of the bifurcation at $r=24.06$. We can state a much more
general result, which shows that the complexity of this repeller is in some
dimensions higher than the complexity of the strange set $K_{24.06}$ (from
the point of view of Conley's theory), although its topological structure
is much simpler:

\begin{theorem}\label{attreppthm}
Let $\varphi _\lambda:\mathbb{R}^{n}\times \mathbb{R}\rightarrow \mathbb{R}^{n}$, $\lambda\in \mathbb{R}$, be a continuous family of flows and let $\Omega _\lambda$,with $\lambda_{0}\leq \lambda\leq \lambda_{1}$, be a continuation of isolated invariant sets.
Suppose that $K_{\lambda_{0}}$ is a repeller for the restricted flow $\varphi
_{\lambda_{0}}|_{\Omega _{\lambda_{0}}}$ and that there exists a family of compacta $K_\lambda$%
, with $\lambda_{0}<\lambda\leq \lambda_{1}$, such that  $K_\lambda$ is an attractor for the
restricted flow $\varphi _\lambda|_{\Omega_\lambda}$ and $K_\lambda$ converges to $%
K_{\lambda_{0}}$ in the Hausdorff metric (or, more generally, $K_\lambda$ converges
upper-semicontinuously to $K_{\lambda_{0}}$). Then a family of repellers $R_\lambda$
of $\varphi _\lambda|_{\Omega_\lambda}$, with $R_\lambda\cap K_\lambda=\emptyset$, is created
for $\lambda>\lambda_{0}$ which upper-semicontinuously converge to $K_{\lambda_{0}}$.
Moreover, if $K_{\lambda}$ has trivial cohomological Conley index in one 
dimension (as it is the case for the Lorenz attractor for dimensions other
than $0$ or $1$), then the cohomological index of $K_{\lambda_{0}}$ in that
dimension is a direct summand of that of $R_{\lambda}$. Finally, the cohomogical
indices of $K_{\lambda_{0}}$ and $R_{\lambda}$ agree in dimension $k$ if $%
K_{\lambda}$ has trivial indices in dimensions $k-1$ and $k$.
\end{theorem}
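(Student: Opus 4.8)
The theorem has three assertions to establish: (i) existence of the dual repeller family $R_\lambda$ with the stated upper-semicontinuous convergence to $K_{\lambda_0}$; (ii) a direct-summand relation between the cohomological Conley indices of $K_{\lambda_0}$ and $R_\lambda$ when $K_\lambda$ has a trivial index in one dimension; and (iii) an agreement of indices in dimension $k$ under a two-dimensional triviality hypothesis. Let me sketch each.

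**Step 1: Producing the repeller family.**

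The plan is to exploit the duality theory for attractor-repeller decompositions inside the continued isolated invariant sets $\Omega_\lambda$. For each $\lambda > \lambda_0$, since $K_\lambda$ is an attractor for the restricted flow $\varphi_\lambda|_{\Omega_\lambda}$, it has a well-defined dual repeller, which I would take to be $R_\lambda$; automatically $R_\lambda \cap K_\lambda = \emptyset$ and $(K_\lambda, R_\lambda)$ is an attractor-repeller decomposition of $\Omega_\lambda$. The content to verify is that $R_\lambda$ converges upper-semicontinuously to $K_{\lambda_0}$ as $\lambda \to \lambda_0^+$. The intuition is that as $K_\lambda$ swells up to fill $K_{\lambda_0}$ (its Hausdorff limit), the complementary repeller must be squeezed toward the boundary of $K_{\lambda_0}$; more precisely, any point in the upper limit $\limsup_{\lambda\to\lambda_0^+} R_\lambda$ cannot lie in the basin of attraction of $K_{\lambda_0}$ in the limiting flow (otherwise a nearby point would be attracted to $K_\lambda$ rather than sitting in $R_\lambda$), so it must lie in $K_{\lambda_0}$, which is the maximal invariant set not attracted to nearby attractors. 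This is the step I expect to be the main obstacle: making the limiting argument rigorous requires controlling how basins and repelling regions vary under the continuation, and the only hypothesis available is convergence of $K_\lambda$, not of the basins. I would argue by contradiction, taking a sequence $x_\lambda \in R_\lambda$ with $x_\lambda \to x \notin K_{\lambda_0}$, using the repelling property to follow backward semitrajectories that stay away from $K_\lambda$, and then invoking upper semicontinuity of $\omega^*$-limits together with the Hausdorff convergence $K_\lambda \to K_{\lambda_0}$ to force $\omega^*_{\lambda_0}(x) \subset K_{\lambda_0}$, contradicting that $x$ escapes.

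**Step 2: The cohomological relations.**

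For the index statements I would work with the attractor-repeller cohomology exact sequence of the decomposition $(K_\lambda, R_\lambda)$ of $\Omega_\lambda$, namely
\[
\cdots \to CH^{i}(R_\lambda) \to CH^{i}(\Omega_\lambda) \to CH^{i}(K_\lambda) \to CH^{i+1}(R_\lambda) \to \cdots
\]
Because the $\Omega_\lambda$ form a continuation, $CH^*(\Omega_\lambda) \cong CH^*(\Omega_{\lambda_0})$ for all $\lambda$ near $\lambda_0$, and the dual repeller $K_{\lambda_0}$ of $\Omega_{\lambda_0}$ fits into the analogous sequence at $\lambda = \lambda_0$. The key is to connect the two. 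If $CH^{i}(K_\lambda) = 0$ in some dimension $i$, the exact sequence degenerates there: the map $CH^{i}(\Omega_\lambda) \to CH^{i}(K_\lambda)$ is zero, so the boundary map $CH^{i-1}(K_\lambda) \to CH^{i}(R_\lambda)$ composed with the preceding terms exhibits $CH^{i}(\Omega_\lambda) \cong CH^i(\Omega_{\lambda_0})$ as a quotient (equivalently a summand, since we work over $\mathbb Z$ after checking torsion) sitting inside $CH^{i}(R_\lambda)$. Since $CH^*(\Omega_{\lambda_0})$ carries the index of $K_{\lambda_0}$ through its own attractor-repeller sequence, I would splice the two exact sequences to extract that $CH^{i}(K_{\lambda_0})$ is a direct summand of $CH^{i}(R_\lambda)$, giving assertion (ii). For assertion (iii), triviality of $CH^{k-1}(K_\lambda)$ and $CH^{k}(K_\lambda)$ forces the segment
\[
0 \to CH^{k}(R_\lambda) \to CH^{k}(\Omega_\lambda) \to 0
\]
to be exact, so $CH^{k}(R_\lambda) \cong CH^{k}(\Omega_\lambda)$, and the same vanishing applied at $\lambda_0$ (via the convergence $K_\lambda \to K_{\lambda_0}$ and continuation invariance of the index, which transfers the triviality to $K_{\lambda_0}$) yields $CH^{k}(K_{\lambda_0}) \cong CH^{k}(\Omega_\lambda)$ as well, whence the two indices agree in dimension $k$.

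**Main obstacle and bookkeeping.**

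The genuinely delicate part is Step 1, the upper-semicontinuous convergence of the dual repellers, because it is a dynamical statement that does not follow formally from the index machinery and must be proved directly from the hypotheses on $K_\lambda$. The cohomological steps are then essentially diagram-chasing in the exact sequences, with the only care needed being the passage from ``quotient'' to ``direct summand,'' which I would justify by a Universal-Coefficient-style argument over $\mathbb{Z}$ ensuring the relevant groups are torsion-free in the dimensions where $K_\lambda$ has trivial index, exactly as was done in the preceding theorem.
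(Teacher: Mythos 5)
Your construction of $R_\lambda$ is the wrong object, and this is the genuine gap. You take $R_\lambda$ to be the dual repeller of $K_\lambda$ in $\Omega_\lambda$; but that set consists of \emph{all} points of $\Omega_\lambda$ whose $\omega$-limit misses $K_\lambda$, and in general (e.g.\ in the Lorenz application, where $\Omega_\lambda$ is the global attractor) it contains invariant pieces far from $K_{\lambda_0}$, such as $C_1$ and $C_2$. So it does not converge upper-semicontinuously to $K_{\lambda_0}$, and no amount of work on your ``main obstacle'' in Step 1 will fix that, because the statement you are trying to prove is false for your choice of $R_\lambda$. The idea you are missing is an intermediate object: since $\Omega_\lambda$ is a continuation, the repeller $K_{\lambda_0}$ of $\varphi_{\lambda_0}|_{\Omega_{\lambda_0}}$ continues to a family of repellers $\hat{K}_\lambda$ of $\varphi_\lambda|_{\Omega_\lambda}$, each of which is the maximal invariant set in a small neighborhood $U$ of $K_{\lambda_0}$. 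Upper semicontinuity of $K_\lambda$ forces $K_\lambda\subset U$, hence $K_\lambda\subset\hat{K}_\lambda$ by maximality. One then takes $R_\lambda$ to be the dual repeller of the attractor $K_\lambda$ \emph{inside} $\hat{K}_\lambda$; it is a repeller of the whole restricted flow because a repeller of a repeller is a repeller, and its convergence to $K_{\lambda_0}$ is automatic since $R_\lambda\subset\hat{K}_\lambda\subset U$. No delicate limiting argument on basins is needed.

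The same misidentification undermines Step 2. Your exact sequence for $(K_\lambda,R_\lambda)$ in $\Omega_\lambda$ has $CH^*(\Omega_\lambda)$ in the middle, which is the index of the ambient set, not of $K_{\lambda_0}$; the proposed ``splicing'' with the sequence at $\lambda_0$ is not carried out and there is no evident way to make it produce $CH^k(K_{\lambda_0})$ as a summand of $CH^k(R_\lambda)$. With the correct decomposition $(K_\lambda,R_\lambda)$ of $\hat{K}_\lambda$, the middle term is $CH^*(\hat{K}_\lambda)\cong CH^*(K_{\lambda_0})$ by continuation invariance, and both cohomological assertions follow by reading off the sequence exactly as you describe. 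Finally, your argument for assertion (iii) invokes ``continuation invariance of the index'' to transfer vanishing of $CH^k(K_\lambda)$ to $K_{\lambda_0}$; this is incorrect, since $K_\lambda$ is \emph{not} a Conley continuation of $K_{\lambda_0}$ (one is an attractor, the other a repeller, and their indices genuinely differ across the bifurcation --- that is the point of the theorem). The continuation of $K_{\lambda_0}$ is $\hat{K}_\lambda$, not $K_\lambda$.
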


\begin{proof}
Since the family of isolated invariant compacta $\Omega _{\lambda}$ is a
continuation of $\Omega _{\lambda_{0}}$ we have that the repeller $K_{\lambda_{0}}$ of $%
\varphi _{\lambda_{0}}|_{\Omega _{\lambda_{0}}}$ continues to a family of repellers $\hat{K}%
_{\lambda}$ of $\varphi _{\lambda}|_{\Omega _{\lambda}}$. Then, for every sufficiently small
neighborhood $U$ of $K_{\lambda_{0}}$ in $\mathbb{R}^{n}$, the compactum $\hat{K}%
_{\lambda}$ is the maximal invariant set contained in $U$ for the flow $\varphi
_{\lambda}|_{\Omega _{\lambda}}$ with $\lambda$ sufficiently close to $\lambda_{0}$. Since the family
of attractors $K_{\lambda}$ converges upper-semicontinuously to $K_{\lambda_{0}}$, they
must be contained in $U$, also for $\lambda$ sufficiently small. But, since $\hat{K%
}_{\lambda}$ is maximal invariant, then $K_{\lambda}$ is, in fact, contained in $\hat{K}%
_{\lambda}$. Now, the fact that $K_{\lambda}$ is an attractor for $\varphi _{\lambda}|_{\Omega
_{\lambda}},$ and hence for $\varphi _{\lambda}|_{\hat{K}_{\lambda}}$, implies the existence of a
dual repeller $R_{\lambda}\subset \hat{K}_{\lambda}$. Since $\hat{K}_{\lambda}$ is itself a
repeller then $R_{\lambda}$ is also a repeller for the flow $\varphi _{\lambda}|_{\Omega
_{\lambda}}$ (not only for $\varphi _{\lambda}|_{\hat{K}_{\lambda}}$). Moreover, the family of
repellers $R_{\lambda}$ clearly converges upper-semicontinuously to $K_{\lambda_{0}}$
(since the family $\hat{K}_{\lambda}$ do) and, obviously, $R_{\lambda}\cap
K_{\lambda}=\emptyset $.

We have now for $\lambda>\lambda_{0}$ an attractor-repeller decomposition $(K_{\lambda},R_{\lambda}) 
$ of the isolated invariant compactum $\hat{K}_{\lambda}$. If we write the
cohomological exact sequence of this decomposition%
\begin{equation*}
\cdots\rightarrow CH^{k-1}(K_{\lambda})\overset{\delta }{\rightarrow }%
CH^{k}(R_{\lambda})\rightarrow CH^{k}(\hat{K}_{\lambda})\rightarrow CH^{k}(K_{\lambda})\overset%
{\delta }{\rightarrow }CH^{k+1}(R_{\lambda})\rightarrow \cdots
\end{equation*}%
and take into consideration the fact that $(\hat{K}_{\lambda})$ is a continuation
of $K_{\lambda_{0}}$ (and, thus, their Conley indices agree) we see that, if $%
CH^{k}(K_{\lambda})$ vanishes then $CH^{k}(R_{\lambda})\rightarrow CH^{k}(\hat{K}_{\lambda})$
is an epimorphism and, hence, $CH^{k}(K_{\lambda_{0}})\cong CH^{k}(\hat{K%
}_{\lambda})$ \ is a direct summand of $CH^{k}(R_{\lambda})$. Moreover, if $%
CH^{k-1}(K_{\lambda})$ also vanishes then $CH^{k}(R_{\lambda})\cong CH^{k}(\hat{K}%
_{\lambda})$.
\end{proof}

We remark again that, in the case of the Lorenz equations, for $r=
24.06$ the strange invariant set $K_{24.06}$ is not yet an attractor. In
fact it is a repeller for the restricted flow $\varphi _{24.06}|_{\Omega
_{24.06}}$, which contains the two branches of the unstable manifold of the
origin and the original periodic orbits $S$ and $T$. Immediately after, the
strange invariant set expels these periodics orbits (while retaining the
unstable manifold) and becomes an attractor (the Lorenz attractor). The
periodic orbits $S$ and $T$ ``travel" through the global attractor and,
finally, are absorbed by the fixed points $C_{1}$ and $C_{2}$ at the
parameter value $r= 24.74$, when a Hopf bifurcation takes place.

From the point of view of the global attractor $\Omega _{r}$, we have that
the pair $(K_{24.06},\{C_{1},C_{2}\})$ defines a repeller-attractor
decomposition of $\Omega $ while the pair $(K_{24.74},\{C_{1},C_{2}\})$
defines an attractor-repeller decomposition. The mechanism which makes
possible this sharp transformation is the expulsion by $K_{24.06}$ of the
original periodic orbits $S$ and $T$ and its posterior absortion by $C_{1}$
and $C_{2}$ at the parameter value $r=24.74$. In other words, the
``travelling repeller" $R_{r}=S\cup T$ is responsible for the transition. We
summarize the process in the following statement.

\subsection*{2} \textit{(From repeller-attractor to attractor-repeller decompositions of }%
$\Omega _{r}$). \textbf{The strange set }$K_{24.06}$ \textbf{is a repeller
relative to the restricted flow }$\varphi _{24.06}|_{\Omega _{24.06}}$, \textbf{%
which contains the two branches of the unstable manifold of the origin and
the original periodic orbits} $S$ \textbf{and} $T$. \textbf{The pair }$%
(K_{24.06},\{C_{1},C_{2}\})$ \textbf{defines a repeller-attractor
decomposition of} \textbf{the global attractor} $\Omega _{24.06}$. \textbf{%
Immediately after (i.e. for }$r>24.06$)\textbf{, the strange invariant set
expels these periodics orbits (while retaining the unstable manifold) and
becomes an attractor (the Lorenz attractor). The set }$R_{r}=S\cup T$ 
\textbf{is a repeller relative to the flow }$\varphi _{r}|_{\Omega _{r}}$ 
\textbf{and ``travels" through } $\Omega _{r}$ \textbf{until finally is absorbed
by }$\{C_{1},C_{2}\}$ \textbf{at the parameter value }$r=24.74$ 
\textbf{of the Hopf bifurcation. At this value, the pair }$%
(K_{24.74},\{C_{1},C_{2}\})$ \textbf{defines an attractor-repeller } \textbf{%
decomposition of }$\Omega _{24.74}.$

Now a few comments about the topological properties of the Lorenz attractor
are in order. Some global properties of the Lorenz attractor have been
studied in \cite{Sanjhopf}. In particular, the Borsuk homotopy type (or shape) of the
attractor is calculated there and from this calculation all the homological
and cohomological invariants follow. Another possibility for studying the
global properties of the attractor is to use the branched manifold (see figure~\ref{fig:1}). We give only a brief, informal, indication on how this
can be done.

\begin{figure}[h]
\includegraphics[scale=.4]{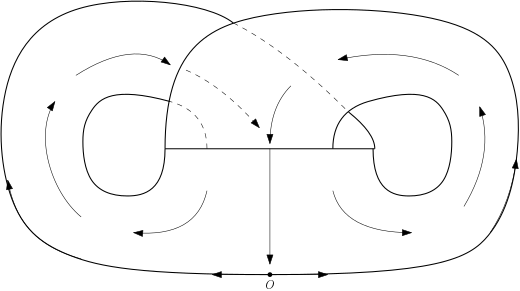}
\caption{Branched manifold}
\label{fig:1}
\end{figure}

The branched manifold is a two-dimensional manifold with singularities (the
branch points) on which the forward flow (i.e. a semi-flow) is defined. In
spite of its name, it is not a manifold but an Absolute Neighborhood Retract
(ANR), an important notion of the Theory of Retracts also studied by Borsuk.
The class of ANRs has homotopical properties similar to those of the
manifolds. The semi-flow in the branched manifold comes from the Lorenz flow
after collapsing to a point certain segments, all whose points share a
common future (see \cite[Appendix G, pg. 229]{Spa} for a discussion). The semi-flow has a global
attractor whose Borsuk homotopy type is the same as that of the Lorenz
attractor, since the above mentioned identification preserves the global
properties of the attractor. By \cite[Theorem~3.6]{KapRod} (see also \cite{GuSe,SanMul,Sanjsta,GMRSNLA}) the
inclusion of the attractor in the manifold is a (Borsuk) homotopy
equivalence. It follows from this that the Borsuk homotopy type (or shape)
of the Lorenz attractor is that of the branched manifold, which turns out to
be that of the figure eight. This agrees with the results found in \cite{Sanjhopf}. A
consequence of this is that the cohomology of the Lorenz attractor and its
cohomological Conley indices are isomorphic to $\mathbb{Z}$ in dimension zero, to $\mathbb{Z}\oplus\mathbb{Z}$ in dimension one and zero otherwise.

Our previous Theorem~\ref{attreppthm} implies that the 2-dimensional cohomological (Conley)
complexity of the travelling repeller $R_{r}$ is higher than that of $%
K_{24.06}$. As a matter of fact, $K_{24.06}$ has the cohomological Conley
indices of the (pointed) circle and hence $CH^{2}(K_{24.06})=\mathbb{\{}0%
\mathbb{\}}$, as it has been remarked in Section~\ref{sec:preturbulence}.  On the other hand, it
follows immediately from our next result that $CH^{2}(R_{r})=\mathbb{Z}%
\oplus \mathbb{Z}$.

The following theorem addresses a more general situation. We simplify the
hypotheses slightly to make the exposition simpler.

\begin{theorem}\label{travel}
Let $\varphi _{\lambda }:\mathbb{R}^{n}\times \mathbb{R}\rightarrow \mathbb{R%
}^{n}$, $\lambda \in \mathbb{R}$, be a continuous family of flows and let $%
\Omega $ be a global attractor for all the flows $\varphi _{\lambda }$. 
Suppose that $K$ and $C$ are isolated invariant sets for every $\lambda $
and that $(K,C)$ is a repeller-attractor decomposition of $\Omega $ for $%
\varphi _{\lambda _{0}}$ and $(K,C)$ is an attractor-repeller decomposition
of $\Omega $ for $\varphi _{\lambda _{1}}$, where $\lambda _{0}<\lambda
_{1}. $ Suppose, additionally, that the isolated invariant set $R_{\lambda }$
is a repeller of $\ \varphi _{\lambda }$ for $\lambda _{0}<\lambda <\lambda
_{1}$ and that $(K\cup C,R_{\lambda })$ is an attractor-repeller
decomposition of $\Omega $. Denote by $r_{k}$ the rank of $H^{k}(K)$ and by $%
r_{k}^{\prime }$ the rank of $H^{k}(C)$. Then we have the following Morse
equations

\[
r_{0}^{\prime }+(r_{1}^{\prime }+r_{0}^{\prime }-1)t+\sum_{k\geq
2}(r_{k}^{\prime }+r_{k-1}^{\prime })t^{k}=1+(1+t)Q_{1}(t),
\]

for the repeller-attractor
decomposition $(K,C)$ of $\varphi _{\lambda _{0}}|_{\Omega} $, 
\[
r_{0}+r_{0}^{\prime }+(r_{1}+r_{0}+r_{1}^{\prime }+r_{0}^{\prime
}-1)t+\sum_{k\geq 2}(r_{k}+r_{k-1}+r_{k}^{\prime }+r_{k-1}^{\prime
})t^{k}=1+(1+t)Q_{2}(t),
\]
 for the attractor-repeller decomposition $(K\cup
C,R_{\lambda })$ of $\varphi _{\lambda }|_{\Omega} $ with $\lambda _{0}<\lambda
<\lambda _{1}$ and

\[
r_{0}+(r_{1}+r_{0}-1)t+\sum_{k\geq
2}(r_{k}+r_{k-1})t^{k}=1+(1+t)Q_{3}(t),
\]
for the
attractor-repeller decomposition $(K,C)$ of $\varphi _{\lambda _{1}}|_{\Omega} $.
\end{theorem}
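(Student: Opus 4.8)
The three displayed identities are the Morse equations of three different attractor-repeller decompositions of $\Omega$, and they have a common shape. The plan is to prove once and for all an abstract statement for an attractor-repeller decomposition $(A,R)$ of the global attractor $\Omega$, expressed in terms of $a_k:=\rank H^k(A)$, and then to specialize it three times. Before that I would record the two facts about $\Omega$ that are used for every $\lambda$: since $\Omega$ is a global attractor of a flow on $\mathbb{R}^n$ it has trivial shape, so it is connected and $CH^*(\Omega)\cong H^*(\Omega)$ is $\mathbb{Z}$ in degree $0$ and $0$ otherwise; in particular $P_t(h(\Omega))=1$. (This is realized exactly as in the proof of the preceding theorem, taking $\Omega$ to be the maximal invariant set in an acyclic positively invariant neighborhood $N$ with $CH^*(\Omega)\cong H^*(N)$, via \cite[Theorem~3.6]{KapRod}.)

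\textbf{The abstract computation.} Let $(A,R)$ be an attractor-repeller decomposition of $\Omega$. Since $A$ is an attractor, a positively invariant isolating neighborhood has empty exit set and shape-retracts onto $A$, so $CH^k(A)\cong H^k(A)$ and $P_t(h(A))=\sum_k a_k t^k$. For $R$ I would use the attractor-repeller cohomology sequence
\[
\cdots\to CH^k(R)\to CH^k(\Omega)\to CH^k(A)\to CH^{k+1}(R)\to\cdots,
\]
coming from an index filtration $N_0\subset N_1$, where $(N_0,\emptyset)$ is an index pair for $A$, $(N_1,\emptyset)$ for $\Omega$ and $(N_1,N_0)$ for $R$; the map $CH^k(\Omega)\to CH^k(A)$ is the restriction induced by $N_0\hookrightarrow N_1$. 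Because $CH^k(\Omega)=0$ for $k\geq 1$, the sequence yields $CH^m(R)\cong CH^{m-1}(A)$ for $m\geq 2$, hence $\rank CH^m(R)=a_{m-1}$. In degree $0$ the segment $0\to CH^0(R)\to CH^0(\Omega)\to CH^0(A)\to CH^1(R)\to 0$ has $CH^0(\Omega)=\mathbb{Z}$ and restriction map $\mathbb{Z}\to H^0(A)=\mathbb{Z}^{a_0}$ sending the constant $1$ to $(1,\dots,1)$, which is injective since $\Omega$ is connected and $A\neq\emptyset$. Thus $CH^0(R)=0$ and $\rank CH^1(R)=a_0-1$, so $P_t(h(R))=(a_0-1)t+\sum_{m\geq 2}a_{m-1}t^m$. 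Feeding this into the Morse equation $P_t(h(A))+P_t(h(R))=P_t(h(\Omega))+(1+t)Q(t)$ gives
\[
a_0+(a_1+a_0-1)t+\sum_{m\geq 2}(a_m+a_{m-1})t^m=1+(1+t)Q(t),
\]
for some $Q(t)$ with non-negative integer coefficients.

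\textbf{Specialization.} For the decomposition $(K,C)$ at $\lambda_0$, here $C$ is the attractor, so $A=C$ and $a_k=r_k'$, which is precisely the first stated equation (with $Q=Q_1$). For $(K,C)$ at $\lambda_1$, now $K$ is the attractor, so $A=K$ and $a_k=r_k$, giving the third equation (with $Q=Q_3$). For the middle decomposition $A=K\cup C$ and $R=R_\lambda$; as $K$ and $C$ are the two disjoint Morse sets, $H^k(K\cup C)\cong H^k(K)\oplus H^k(C)$, so $a_k=r_k+r_k'$, and the identity becomes the second equation (with $Q=Q_2$).

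\textbf{Main obstacle.} The delicate point is the degree-zero analysis of the repeller: proving $CH^0(R)=0$ and $\rank CH^1(R)=a_0-1$. This is exactly where the global hypothesis is indispensable, since it forces $\Omega$ to be connected with $CH^0(\Omega)=\mathbb{Z}$ and makes the restriction into $H^0(A)$ injective; this injectivity is what produces the $-1$ in the linear coefficient of each equation. A secondary point I would check carefully is the disjointness of $K$ and $C$ for $\lambda_0<\lambda<\lambda_1$, needed so that the cohomology of $K\cup C$ splits; this follows from their being distinct Morse sets continued from the disjoint pair present at $\lambda_0$.
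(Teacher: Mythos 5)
Your proposal is correct and follows essentially the same route as the paper: the paper's Lemma~\ref{lem:travel} is proved by exactly your abstract computation for an attractor--repeller pair $(A,R)$ of the global attractor (giving $CH^k(A)\cong H^k(A)$, $CH^{k+1}(R)\cong CH^k(A)$ for $k>0$, $CH^0(R)=0$ and $CH^0(A)\cong\mathbb{Z}\oplus CH^1(R)$), which is then specialized to the three decompositions. The only cosmetic difference is that you derive $CH^0(R)=0$ from injectivity of $CH^0(\Omega)\to CH^0(A)$ using connectedness of $\Omega$, whereas the paper invokes the fact that no component of a repeller dual to a nonempty attractor is itself an attractor.
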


To prove Theorem~\ref{travel} we shall make use of the following lemma.

\begin{lemma}\label{lem:travel}
In the conditions of Theorem~\ref{travel} we have that
\begin{enumerate}
\item[a)] $CH^{k}(C)\cong H^{k}(C)\cong CH^{k+1}(K)$ if $k>0$ and $CH^0(C)\cong H^0(C)\cong\mathbb{Z}\oplus CH^1(K)$ for the flow $%
\varphi _{\lambda _{0}}$. 

\item[b)] $CH^{k}(K)\cong H^{k}(K)\cong CH^{k+1}(C)$ if $k>0$ and $CH^0(K)\cong H^0(K)\cong\mathbb{Z}\oplus CH^1(C)$ for the flow $%
\varphi _{\lambda _{1}}$.

\item[c)] $CH^{k+1}(R_{\lambda })\cong CH^{k}(K\cup C)\cong H^{k}(K\cup C)$ if $k>0$ and $CH^0(K\cup C)\cong H^0(K\cup C)\cong\mathbb{Z}\oplus CH^1(R_\lambda)$ for the flow $\varphi _{\lambda }$with $\lambda _{0}<\lambda <\lambda _{1}.$

\end{enumerate}
\end{lemma}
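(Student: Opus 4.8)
The plan is to derive all three parts of Lemma~\ref{lem:travel} from the long exact cohomology sequence of an attractor-repeller pair, using the fact that the ambient isolated invariant set in each case is either the global attractor $\Omega$ or a continuation of one of the pieces, whose Conley index is accordingly controlled. The key standing fact I would invoke is that $\Omega$ is a global attractor, so by \cite[Theorem~3.6]{KapRod} (used exactly as in the proof of the preturbulence theorem) its cohomological Conley index is acyclic: $CH^0(\Omega)\cong\mathbb{Z}$ and $CH^k(\Omega)=0$ for $k>0$. I would also note at the outset that, since $K$, $C$, $K\cup C$ and $R_\lambda$ are all attractors or repellers of the restricted flow and hence ANRs (or at least have \v{C}ech cohomology agreeing with the cohomology of a nice neighborhood), we have $CH^k(\cdot)\cong H^k(\cdot)$ throughout, which accounts for the middle isomorphisms in each clause.

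For part a), at $\lambda=\lambda_0$ the pair $(K,C)$ is a \emph{repeller-attractor} decomposition of $\Omega$, so $C$ is the attractor and $K$ the dual repeller. First I would write the attractor-repeller exact sequence in the form
\[
\cdots\rightarrow CH^{k}(K)\rightarrow CH^{k}(\Omega)\rightarrow CH^{k}(C)\xrightarrow{\ \partial\ } CH^{k+1}(K)\rightarrow\cdots
\]
Then I would split into cases on $k$. For $k>0$ both $CH^k(\Omega)$ and $CH^{k+1}(\Omega)$ vanish, so the connecting map $\partial:CH^k(C)\to CH^{k+1}(K)$ is sandwiched between two zero groups and is therefore an isomorphism, giving $CH^k(C)\cong CH^{k+1}(K)$. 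For the $k=0$ case I would isolate the low-degree segment
\[
0\rightarrow CH^{0}(K)\rightarrow CH^{0}(\Omega)\rightarrow CH^{0}(C)\xrightarrow{\ \partial\ } CH^{1}(K)\rightarrow CH^{1}(\Omega)=0,
\]
use $CH^0(\Omega)\cong\mathbb{Z}$ and the fact that the repeller $K$ is nonempty (so $CH^0(K)\cong\mathbb{Z}$ maps isomorphically onto the $\mathbb{Z}$ summand), and read off $CH^0(C)\cong\mathbb{Z}\oplus CH^1(K)$ from exactness together with the freeness of these finitely generated groups. Part b) is the mirror image: at $\lambda=\lambda_1$ the roles swap so $(K,C)$ is an attractor-repeller decomposition with $K$ the attractor and $C$ the repeller, and the identical argument applied to the sequence $\cdots\to CH^k(C)\to CH^k(\Omega)\to CH^k(K)\to CH^{k+1}(C)\to\cdots$ yields the stated isomorphisms with $K$ and $C$ interchanged.

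Part c) runs the same machine one level up: for $\lambda_0<\lambda<\lambda_1$ the hypothesis gives that $(K\cup C,R_\lambda)$ is an attractor-repeller decomposition of $\Omega$, so I would apply the argument of part a) verbatim with the attractor $C$ replaced by $K\cup C$ and the repeller $K$ replaced by $R_\lambda$, again invoking acyclicity of $CH^*(\Omega)$ and nonemptiness of $R_\lambda$ to get $CH^{k+1}(R_\lambda)\cong CH^k(K\cup C)$ for $k>0$ and $CH^0(K\cup C)\cong\mathbb{Z}\oplus CH^1(R_\lambda)$. The only real subtlety, and the step I would treat most carefully, is the $k=0$ bookkeeping: one must know that the map $CH^0(\text{repeller})\to CH^0(\Omega)\cong\mathbb{Z}$ is injective with image a direct summand, which requires that the repeller be nonempty and that all groups in play be finitely generated and free in degree zero. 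This is where I would lean on the fact that these invariant sets have the (\v{C}ech) cohomology of compacta with finitely generated cohomology, so the Universal Coefficient argument used earlier in the paper guarantees that the zero-degree extension $0\to CH^0(K)\to CH^0(\Omega)\to(\text{image in }CH^0(C))\to 0$ splits and the complementary summand is exactly the kernel of $\partial$, letting me conclude $CH^0(C)\cong\mathbb{Z}\oplus CH^1(K)$ rather than merely an extension.
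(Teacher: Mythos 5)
Your overall strategy --- the attractor-repeller cohomology exact sequence combined with the acyclicity of $CH^{*}(\Omega)$ for the global attractor --- is exactly the paper's, and your treatment of the case $k>0$ (the connecting homomorphism sandwiched between two zero groups) is correct. The degree-zero bookkeeping, however, contains a genuine error, not just a gap. You assert that the repeller $K$, being nonempty, has $CH^{0}(K)\cong\mathbb{Z}$ mapping isomorphically onto $CH^{0}(\Omega)\cong\mathbb{Z}$. This is false: the degree-zero cohomological Conley index of a repeller no component of which is an attractor is $0$, since the exit set of any index pair meets every component of the isolating neighborhood (a repelling fixed point in $\mathbb{R}^{n}$ has index $S^{n}$, so $CH^{0}=0$ and the generator sits in degree $n$). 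Worse, if your claim were true, exactness of
\[
0\to CH^{0}(K)\to CH^{0}(\Omega)\to CH^{0}(C)\xrightarrow{\partial} CH^{1}(K)\to 0
\]
would force $CH^{0}(\Omega)\to CH^{0}(C)$ to be zero and hence $\partial$ to be an isomorphism, giving $CH^{0}(C)\cong CH^{1}(K)$ --- contradicting the $\mathbb{Z}\oplus CH^{1}(K)$ you are trying to prove. The paper's route is: $CH^{0}(\mbox{repeller})=0$, so the low-degree tail collapses to the short exact sequence $0\to\mathbb{Z}\cong CH^{0}(\Omega)\to CH^{0}(C)\to CH^{1}(K)\to 0$, which splits because $CH^{1}(K)$ is torsion free by the Universal Coefficient Theorem; that is the only source of the extra $\mathbb{Z}$ summand.

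A secondary point: the identification $CH^{k}(\cdot)\cong H^{k}(\cdot)$ does \emph{not} hold ``throughout''; it is claimed in the lemma only for the attractor of each decomposition ($C$ in a), $K$ in b), $K\cup C$ in c)), and it manifestly fails for the repellers --- the degree shift in the conclusion is precisely the statement that a repeller's index is not its cohomology. The correct justification is also not that these sets are ANRs (attractors of Lorenz type are not), but that an attractor of $\varphi_{\lambda}|_{\Omega}$ with $\Omega$ a global attractor is an attractor of $\varphi_{\lambda}$ itself, for which $(N,\emptyset)$ is an index pair with $N$ a positively invariant neighborhood having the shape, and hence the \v Cech cohomology, of the attractor, as in \cite[Theorem~3.6]{KapRod}.
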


\begin{proof}

We shall prove a more general result which encompasses  a), b) and c). Let $\varphi:\mathbb{R}^n\times\mathbb{R}\to \mathbb{R}^n$ a dissipative flow with global attractor $\Omega$. Suppose that $(A,R)$ is an attractor repeller decomposition of $\Omega$ and consider the cohomology long exact sequence associated to the decomposition $(A,R)$,  
\[
\cdots\xrightarrow{\partial} CH^*(R)\xrightarrow{j^*} CH^*(\Omega)\xrightarrow{i^*} CH^*(A)\xrightarrow{\partial}\cdots
\]
since $\Omega$ is a global attractor, it follows that $CH^k(\Omega)$ is $\mathbb{Z}$ if $k=0$ and zero if $k>0$. Taking this into account in the exact sequence it readily follows that $CH^k(A)\cong CH^{k+1}(R)$ if $k>0$. On the other hand, since none of the components $R$ is an attractor, it follows that $CH^0(R)=0$ and, hence, the initial part of the sequence looks like
\[
0\to CH^0(\Omega)\to CH^0(A)\to CH^1(R)\to 0
\]
The Universal Coefficient ensures that $CH^1(R)$ must be torsion free and, as a consequence, the short exact sequence splits. Then
\[
CH^0(A)\cong CH^0(\Omega)\oplus CH^1(R)\cong \mathbb{Z}\oplus CH^1(R)
\]
Notice that, since $A$ is an attractor for the flow $\varphi$ restricted to the global attractor $\Omega$, then $A$ is an attractor for $\varphi$. Therefore $CH^*(A)\cong H^*(A)$. The result follows by replacing $A$ and $R$ by the corresponding sets.
\end{proof}

\begin{proof}[Proof of Theorem~\ref{travel}]
The proof follows from Lemma~\ref{lem:travel} combined with the fact that $CH^k(\Omega)$ is $\mathbb{Z}$ if $k=0$ and zero if $k>0$, $\Omega$ being  a global attractor for each $\lambda$. 
\end{proof}

Concerning the previous lemma, it is interesting to note that, when $%
\Omega $ is a global attractor, then the topological properties of $K$ and $%
C $ determine the cohomological Conley indices and the Morse equations of
all the involved isolated invariant sets, including $R_{\lambda }.$ It is
also interesting to see how the transition from repeller-attractor to
attractor-repeller is reflected in the Morse equations.

Another situation, not applicable to the Lorenz equations but provided of
theoretical interest, is when we have a flow in a compact manifold $M$ and a
similar transition for a pair $(K,L).$ Then McCord duality for
attractor-repeller pairs \cite{McH,MrSr} is applicable and the equations are determined
by the topology of $K$ and $M$ alone (the Conley index properties of $C$
being dual to those of $M$).

We finally point out that the evolution of the Lorenz attractor that we have
just studied has a nice counterpart from the analytical point of view. The following statement summarizes the situation.

\subsection*{3} \textbf{The transition from the repeller-attractor decomposition }$%
(K_{24.06},\{C_{1},C_{2}\})$ \textbf{(\textit{creation of the Lorenz
attractor})} \textbf{to the attractor-repeller decomposition }$%
(K_{24.74},\{C_{1},\break 
C_{2}\})$ \textbf{(\textit{Hopf bifurcation})} \textbf{%
through the decomposition }$(K_r\cup \{C_{1},C_{2}\},R_{r}=S\cup L)$ \textbf{(%
\textit{involving the travelling repellers} }$R_{r}$\textbf{) of the global
attractor }$\Omega $ \textbf{is reflected in the Morse equations shown in Theorem~\ref{travel}}.

Applying Theorem~\ref{travel} to this situation we get that, for $r=24.06$ the Morse equations associated to the repeller-attractor decomposition $(K_{24.06},\{C_{1},C_{2}\})$ of $\varphi_{24.06}|_{\Omega_{24.06}}$ are
\[
2+t=1+(1+t),
\]
for $r$ with $24.06<r<24.74$ the Morse equations associated to the attractor-repeller decomposition $(K_r\cup \{C_{1},C_{2}\},R_{r}=S\cup L)$ of $\varphi_r|_{\Omega_r}$ are
\[
3+4t+2t^2=1+(1+t)(2+2t),
\]
and, for $r=24.74$ the Morse equations associated to the attractor-repeller decomposition $(K_{24.74},\{C_{1},C_{2}\})$ of $\varphi_{24.74}|_{\Omega_{24.74}}$ are
\[
1+2t+2t^2=1+(1+t)2t.
\]

\section*{Acknowledgements}

Some of the results of this paper were obtained while the second author was
visiting the University of Manchester. He wishes to express his gratitude to
the Department of Mathematics of this University and, very specially, to
Nigel Ray for his hospitality and friendship along many years. The authors
are also grateful to Jos\'{e} M. Montesinos-Amilibia and Jaime S\'{a}%
nchez-Gabites for many inspiring conversations. They also would like to thank the referees, whose useful comments and suggestions have improved the quality of the present manuscript.

\bibliographystyle{plain}  
\bibliography{Biblio1}  

\end{document}